\newtheorem{thm}{Theorem}
\newtheorem{lem}[thm]{Lemma}
\newtheorem{cor}[thm]{Corollary}
\newtheorem{qn}[thm]{Question}
\title{On Tribonacci Sequences}
\date{Saturday 28, January 2023}
\author{Luke Pebody}
\begin{document}
\maketitle
\begin{abstract}
Let a tribonacci sequence be a sequence of integers satisfying $a_k=a_{k-1}+a_{k-2}+a_{k-3}$
for all $k\ge 4$. For any positive integers $k$ and $n$, denote by $f_k(n)$ the number of tribonacci sequences with $a_1, a_2, a_3>0$ and
with $a_k=n$.

For all $n$, there is a maximum $k$ such that $f_k(n)$ is non-zero. Answering a question 
of Spiro~\cite{Spiro}, we show that there is a finite upper bound 
(we specifically prove 561001) on $f_k(n)$
for any positive integer $n\ge 3$ and this maximum 
$k$.

We do this by showing that $f_k(n)$ has transitions in $n$ around constant multiples of $\phi^{3k/2}$
(where $\phi$ is the real root of
$\phi^3=\phi^2+\phi+1$):
there exists a constant $C$ such that $f_k(n)>0$ whenever $n>C\phi^{3k/2}$ and for 
any constant $T$, the values of $f_k(n)$ with $n<T\phi^{3k/2}$ have an upper bound independent of $k$.
\end{abstract}
\section{Introduction}
A {\em tribonacci sequence of length $k$} is a sequence of integers 
$\langle a_i\rangle_{i=1}^k$ such that $a_i=a_{i-1}+a_{i-2}+a_{i-3}$ for all $4\le i\le k$. We say that such a 
sequence {\em terminates} at $a_k$ and that it is {\em positive} if $a_1, a_2, a_3>0$ - note that
this easily implies that $a_i>0$ for all $i$. Denote by 
$f_k(n)$ the number of tribonacci sequences of length $k$ 
terminating at $n$.

Clearly $f_1(n)=1$ for all $n>0$, the only tribonacci 
sequence of length 1 terminating at $n$ being 
$\langle n\rangle$. Further, 
$f_2(n)=f_3(n)=\infty$ as we can choose any values for the
proceeding terms.

For $n\ge 3$, there exists a tribonacci sequence of length longer
than 3 terminating at $n$, for example 
$\langle n-2, 1, 1, n\rangle$. However for any tribonacci 
sequence $\langle a_i\rangle_{i=1}^k$ of length $k$, and for
any $4\le i\le k$, 
$a_i=a_{i-1}+a_{i-2}+a_{i-3}\ge a_{i-1}+2$, so by induction
$a_i\ge 2i-5$ for all $3\le i\le k$, and hence if $n<2k-5$,
$f_k(n)=0$. Let $t(n)$ be the largest number such that 
$f_{t(n)}(n)>0$.

Let $p(n)$ denote the number of positive tribonacci sequences of 
length $t(n)$ terminating at $n$, so $p(n)=f_{t(n)}(n)$.

Clearly, since $t(1)=t(2)=3$ it follows that $p(1)=p(2)=\infty$. 
Spiro~\cite{Spiro} asks
\begin{qn}
Does there exist some absolute constant $c$ such that for all $n\ge 3$, $p(n)\le c$ for all
$n$?
\end{qn}

The purpose of this paper is to give a positive answer to this question. Indeed we will show
\begin{thm}\label{T:barbara}
For any integer $n\ge 3$, there are at most 561001 positive tribonacci sequences of length $t(n)$ 
terminating at $n$.
\end{thm}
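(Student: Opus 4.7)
The plan is to follow the two-part strategy described in the abstract: establish a \emph{positivity threshold} (a constant $C$ with $f_k(n)>0$ whenever $n>C\phi^{3k/2}$) and a \emph{density bound} (for each constant $T$, an absolute bound on $f_k(n)$ whenever $n<T\phi^{3k/2}$). Since $f_{t(n)+1}(n)=0$ by definition of $t(n)$, the contrapositive of the positivity threshold forces $n\le C\phi^{3(t(n)+1)/2}=(C\phi^{3/2})\phi^{3t(n)/2}$, and then applying the density bound at $k=t(n)$ with $T=C\phi^{3/2}$ yields an absolute upper bound on $p(n)=f_{t(n)}(n)$.

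The starting point is the linear representation
\[
a_k = U_k a_1 + V_k a_2 + W_k a_3,
\]
where $\langle U_k\rangle,\langle V_k\rangle,\langle W_k\rangle$ are the tribonacci sequences with respective initial triples $(1,0,0), (0,1,0), (0,0,1)$. Each grows asymptotically like a positive constant times $\phi^k$, so $U_k V_k W_k=\Theta(\phi^{3k})$; since the companion matrix of the recurrence is unimodular, an easy induction gives $\gcd(U_k,V_k,W_k)=1$. Thus $f_k(n)$ is the number of triples $(a_1,a_2,a_3)\in\mathbb{Z}_{\ge 1}^3$ lying on the plane $U_k a_1+V_k a_2+W_k a_3=n$, i.e.\ a lattice-point count on an affine triangle whose linear dimensions scale as $n/\phi^k$.

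For the positivity threshold, substitute $b_i=a_i-1$ and reduce to representing $n-(U_k+V_k+W_k)$ as a non-negative integer combination of the coprime triple $(U_k,V_k,W_k)$; an effective three-variable Frobenius bound --- or more concretely, first fixing one coordinate in the correct residue class modulo $\gcd(V_k,W_k)$ and then invoking the two-variable Frobenius formula --- yields representability for all $n\gtrsim \phi^{3k/2}$. For the density bound, I use Gauss's area principle: the triangle has $2$-dimensional area $\tfrac{n^2}{2U_k V_k W_k}\sqrt{U_k^2+V_k^2+W_k^2}$, and the sublattice $\mathbb{Z}^3\cap\{U_k x+V_k y+W_k z=0\}$ has covolume $\sqrt{U_k^2+V_k^2+W_k^2}$, so the main term $\tfrac{n^2}{2U_k V_k W_k}$ is $O(1)$ whenever $n\le T\phi^{3k/2}$. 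Controlling the boundary/perimeter error requires a lower bound on the shortest non-zero vector of this sublattice, which is the central technical input.

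The main obstacle, and the sole source of the explicit constant $561001$, will be making both estimates \emph{effective} rather than merely asymptotic: pinning down a sharp Frobenius-type constant $C$ for this specific family of triples, and then sharpening the Gauss-style density argument into an exact count. I expect this to require exploiting the rapid convergence of $U_k/\phi^k,V_k/\phi^k,W_k/\phi^k$ to their limits (which in turn controls the shortest-vector question through explicit structural relations among the three sequences), combined with a finite base-case computation handling the small values of $k$ directly before a monotonicity argument takes over for large $k$.
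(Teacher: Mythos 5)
Your high-level skeleton is exactly the paper's: a positivity threshold of order $\phi^{3k/2}$, a density bound for $n<T\phi^{3k/2}$, and the combination step at lengths $t(n)$ and $t(n)+1$ (the paper gets $n<0.2\phi^{3(t(n)+1)/2}$ from its Theorem~\ref{T:ian} and then $\lceil 300\phi^{3/2}\rceil^2=749^2=561001$ from its Theorem~\ref{T:susan}). But both halves of your sketch defer or misjudge the actual content. The serious flaw is the positivity threshold. A generic effective three-variable Frobenius bound for coprime $U_k,V_k,W_k\sim\phi^k$ only gives a threshold of order $\phi^{2k}$, not $\phi^{3k/2}$: the Frobenius number of three numbers of size $N$ can genuinely be of order $N^2$ (e.g.\ for $p,q,p+q$ it equals that of $p,q$), and your concrete route --- fix one coordinate in the right class modulo $\gcd(V_k,W_k)$ and apply the two-variable formula --- yields roughly $V_kW_k/\gcd(V_k,W_k)^2$, again $\sim\phi^{2k}$ unless that gcd were as large as $\phi^{k/4}$, for which there is no reason. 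A threshold of $\phi^{2k}$ is useless here: plugged into the density bound it gives a count of order $(\phi^{2k}/\phi^{3k/2})^2=\phi^{k}$, which is unbounded in $k$. To reach $\phi^{3k/2}$ the paper must \emph{construct} two small relations among $(U_k,V_k,W_k)$ --- integer tribonacci sequences of length $k$ terminating at $0$ with entries $O(\phi^{k/2})$ and prescribed sign patterns (Lemma~\ref{L:yates}, proved via the representation $a_i=(\alpha\phi^{-3i/2}+\beta\cos(\gamma-\delta i))\phi^{i/2}$ and a case analysis on the fractional part of $\delta n/2\pi$) --- and then feed them into a Killingbergtro-type bound (Theorem~\ref{T:sarah}), which bounds the Frobenius number by $a_1U_k+b_2V_k+W_k\sim\phi^{3k/2}$. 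Nothing in your proposal produces these sign-constrained small relations.

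The second gap is that your density bound's ``central technical input'' --- a lower bound of order $\phi^{k/2}$ on the shortest nonzero vector of the relation lattice, equivalently on $\max(|a_1|,|a_2|)$ over nonzero integer tribonacci sequences of length $k$ terminating at $0$ --- is precisely the paper's Corollary~\ref{C:polly} (via Lemmas~\ref{L:benson}, \ref{L:victoria} and \ref{L:zoe}), and you leave it unproved, saying only that you expect it to follow from the convergence of $U_k/\phi^k$ etc. That expectation is not a proof: the bound comes from the two complex conjugate roots having modulus $\phi^{-1/2}$, a uniform lower bound on the oscillation amplitude $\beta$ in terms of $|k|+|l|$, and a trigonometric argument showing two consecutive cosines cannot both be small. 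Granting that input, your Gauss-type lattice-point count on the triangle is a reasonable alternative to the paper's pigeonhole partition by $(\lfloor a_1/0.01\phi^{k/2}\rfloor,\lfloor a_2/0.01\phi^{(k-1)/2}\rfloor)$; but as written, both pillars of your argument are missing, and the one route you do specify (the Frobenius reduction) would not give the exponent $3k/2$ that the whole strategy hinges on.
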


It turns out the key question for our proof is the minimum size of the vector
$\begin{pmatrix}a_1\\a_2\\a_3\end{pmatrix}$ where 
$\langle a_i\rangle_{i=1}^n$ is a non-zero tribonacci 
sequence terminating at $a_n=0$. In Section~\ref{S:dalek} we will
show a lower bound on such a sequence of the
order of $\phi^{n/2}$, which will allow us to prove

\begin{thm}\label{T:susan}
For any positive integers $n, k$ with $k\ge 4$, the number
of positive sequences of length $k$ terminating at $n$ is 
at most
\[\lceil1500\frac{n}{\phi^{3k/2}}\rceil^2.\]
\end{thm}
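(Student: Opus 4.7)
The plan is to recast the problem as counting lattice points of a rank-$2$ integer lattice inside a bounded convex region of a $2$-plane. The terminal value $a_k$ is a positive linear functional of the initial triple, namely $T_k(a_1,a_2,a_3) = c_1 a_1 + c_2 a_2 + c_3 a_3$ with positive integer coefficients $c_i$ (themselves tribonacci-type numbers), and a positive sequence of length $k$ terminating at $n$ corresponds to an integer point of the positive octant lying on the affine plane $T_k^{-1}(n)$. First I would verify, using the characteristic root $\phi$, that $c_i \ge \kappa \phi^k$ for some explicit $\kappa>0$ uniformly in $i$ and $k\ge 4$; this implies that $T_k^{-1}(n) \cap \R_{\ge 0}^{3}$ is a triangle with vertices on the coordinate axes, of Euclidean diameter at most $D_k := \sqrt{2}\,n/(\kappa \phi^k)$.

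Any two distinct solutions differ by a nonzero element of the kernel lattice $L := (\ker T_k) \cap \mathbb{Z}^3$, which has rank $2$. The key input, whose proof is postponed to Section~\ref{S:dalek}, is the lower bound $\lambda_1(L) \ge \mu\,\phi^{k/2}$ for the Euclidean length of the shortest nonzero vector of $L$. I would then pick a Gauss-reduced basis $v_1,v_2$ of $L$, so that $\|v_1\|,\|v_2\| \ge \lambda_1(L)$ and the angle $\theta$ between them satisfies $|\sin \theta| \ge \sqrt{3}/2$. Completing the square in $\|m_1 v_1 + m_2 v_2\|^2$ shows that any such vector of norm at most $D_k$ has $|m_i| \le 2D_k/(\sqrt{3}\,\lambda_1(L))$, so the number of valid solutions is at most
\[\left(\frac{4 D_k}{\sqrt{3}\,\mu\phi^{k/2}} + 1\right)^2 = \left(\frac{C n}{\phi^{3k/2}} + 1\right)^2\]
for an absolute constant $C$ determined by $\kappa$ and $\mu$.

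The remainder is a routine verification that this quantity is at most $\lceil 1500\,n/\phi^{3k/2}\rceil^2$; the loose constant $1500$ presumably absorbs $C$ together with edge-of-ceiling effects when $n/\phi^{3k/2}$ is small. The principal obstacle is the lower bound $\lambda_1(L) \ge \mu\,\phi^{k/2}$, which requires an arithmetic analysis of tribonacci sequences run backwards to a zero entry and is the content of Section~\ref{S:dalek}; by contrast, the reduction to a two-dimensional lattice-point count and the Gauss-reduction step are essentially standard.
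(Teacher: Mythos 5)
Your reduction is sound in outline and, in substance, it is the same argument as the paper's: two solutions of length $k$ terminating at $n$ differ by a nonzero integer tribonacci sequence terminating at $0$; Section~\ref{S:dalek} supplies a lower bound of order $\phi^{k/2}$ on such a difference; and the growth of the coefficient sequences $p_k,q_k,r_k$ bounds the admissible $a_1,a_2$ by $O(n/\phi^k)$, whence at most $O(n/\phi^{3k/2})^2$ solutions. The paper implements the count more directly, pigeonholing on the pair $(\lfloor a_1/(0.01\phi^{k/2})\rfloor,\lfloor a_2/(0.01\phi^{(k-1)/2})\rfloor)$ and invoking Corollary~\ref{C:polly}, which is a \emph{coordinatewise} statement: a nonzero sequence terminating at $0$ has $|a_1|>0.01\phi^{k/2}$ or $|a_2|>0.01\phi^{(k-1)/2}$. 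Your Gauss-reduction step itself is correct ($\|v_1\|,\|v_2\|\ge\lambda_1$, $|\sin\theta|\ge\sqrt3/2$, hence $|m_i|\le 2D_k/(\sqrt3\,\lambda_1)$), so your route does prove a bound of the form $\lceil C'n/\phi^{3k/2}\rceil^2$ for \emph{some} absolute constant $C'$, which is all that Theorem~\ref{T:barbara} ultimately needs.

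The gap is precisely the step you dismiss as routine, because that is where the stated constant $1500$ must be produced, and your chain does not produce it. What Section~\ref{S:dalek} actually proves is the coordinatewise $0.01$ bound, so the Euclidean shortest-vector constant you can extract is only $\mu\approx 0.01/\sqrt\phi\approx 0.007$ (the stronger Euclidean bound $0.28\,\phi^{n/2}$ is mentioned only as belonging to a discarded earlier draft). Moreover $\min_i c_i$ is as small as $p_5=1$ against $\phi^5\approx 21$, so the best uniform $\kappa$ is about $1/21$ (the paper's own ``$\phi^k/11$'' bookkeeping is already strained here). With these values your constant $C=4\sqrt2/(\sqrt3\,\kappa\mu)$ is of order $10^4$, and $(Cx+1)^2\le\lceil1500x\rceil^2$ is simply false for large $x=n/\phi^{3k/2}$; to get under $1500$ by your route you would need $\mu\gtrsim 0.05$, i.e.\ a substantially stronger Section~\ref{S:dalek} input. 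The coordinatewise pigeonhole avoids exactly the losses you incur (the $\sqrt2$ from the triangle's Euclidean diameter, the $4/\sqrt3$ from reduction, and the drop from the coordinatewise bound to a Euclidean $\lambda_1$). Separately, even for small $C$ the inequality $(Cx+1)^2\le\lceil1500x\rceil^2$ fails for $0<x<1/1500$; there you must use integrality of the count (equivalently keep $2\lfloor M\rfloor+1$ rather than $2M+1$), a fixable but real omission. So either redo the counting coordinatewise against Corollary~\ref{C:polly} as the paper does, or weaken the statement to an unspecified absolute constant.
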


In Section~\ref{S:cyberman} we turn to trying to put an upper bound on numbers that don't
have any positive tribonacci sequences of length $k$ terminating at them. This is an instance of
the Coin Problem, also known as calculating the Frobenius Number. We construct two 
specific tribonacci
sequences terminating at $a_n=0$ with $\begin{pmatrix}a_1\\a_2\\a_3\end{pmatrix}$ being of the 
order of $\phi^{n/2}$ and with the integers $a_1, a_2, a_3$ having specified signs, allowing
us to prove

\begin{thm}\label{T:ian}
For any integer $n$ above $0.2\phi^{3k/2}$, there exists a positive tribonacci sequence of
length $k$ terminating at $n$.
\end{thm}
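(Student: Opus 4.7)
The plan is to reformulate the theorem as a three-variable Frobenius (``coin'') problem. Let $u_k, v_k, w_k$ denote the terminal values of the three basis tribonacci sequences with initial triples $(1,0,0)$, $(0,1,0)$, and $(0,0,1)$; then a tribonacci sequence of length $k$ ending at $n$ corresponds to an integer solution of
\[
u_k a_1 + v_k a_2 + w_k a_3 = n,
\]
and positivity of the sequence means $a_1, a_2, a_3 \geq 1$. Since $u_k, v_k, w_k$ all grow like $\phi^k$, the Frobenius number for these coefficients is expected to be on the scale of $\sqrt{u_k v_k w_k} \approx \phi^{3k/2}$, which matches the claimed bound.

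First I would verify $\gcd(u_k, v_k, w_k) = 1$ by induction from the initial conditions, so that integer (not necessarily positive) solutions exist for every $n$. Then I would invoke the two kernel triples $\vec{P} = (p_1, p_2, p_3)$ and $\vec{Q} = (q_1, q_2, q_3)$ promised at the start of Section~\ref{S:cyberman}: each satisfies $u_k x_1 + v_k x_2 + w_k x_3 = 0$, each has entries bounded by $O(\phi^{k/2})$, and they carry prescribed and complementary sign patterns on $(x_1, x_2, x_3)$.

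The heart of the proof is then a lattice-shifting argument: for any integer solution $\vec{a}^{(0)}$, every shift $\vec{a}^{(0)} + s \vec{P} + t \vec{Q}$ with $s,t\in\mathbb{Z}$ is again a solution. Because $\vec{P}$ and $\vec{Q}$ have opposing sign patterns, tuning $s$ and $t$ lets us move the three coordinates of $\vec{a}$ almost independently. I would choose a convenient seed (for instance, confine $a_1, a_2$ to a small box and let $a_3$ absorb the equation modulo $w_k$), keeping the negativity of any coordinate to at most $O(\phi^{k/2})$. The hypothesis $n > 0.2\phi^{3k/2}$ then supplies enough ``budget'' on each coordinate (each unit of $a_i$ consumes $\sim \phi^k$ of $n$) that the shifts by $\vec{P}, \vec{Q}$ can always correct the seed into strict positivity, with the constant $0.2$ coming from explicit norm estimates on $\vec{P}, \vec{Q}$.

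The hardest step will be this shifting/covering argument: I expect to have to verify, with sharp explicit constants, that the sign patterns and sizes of $\vec{P}, \vec{Q}$ are precisely tuned so that the two-parameter family $\vec{a}^{(0)} + s\vec{P} + t\vec{Q}$ always meets the positive octant for the stated range of $n$. A secondary subtlety is that $\vec{P}$ and $\vec{Q}$ must together span enough of the rank-2 kernel lattice that no residue class of integer solutions can be stranded far from positivity — this has to be read off the explicit construction in Section~\ref{S:cyberman} rather than argued abstractly.
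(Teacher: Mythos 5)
Your setup matches the paper's: reduce to a Frobenius problem for $p_k,q_k,r_k$ (your $u_k,v_k,w_k$), note $\gcd(p_k,q_k,r_k)=1$, and use two kernel triples with sign patterns $(+,\le 0,-)$ and $(\le 0,+,-)$ and entries $O(\phi^{k/2})$ (these are exactly the sequences of Lemma~\ref{L:yates}). But the decisive step --- converting those two triples into ``every $n>0.2\phi^{3k/2}$ is representable with strictly positive coefficients'' --- is precisely what you have not supplied, and your proposed replacement has a real obstruction that you flag but do not resolve: the two kernel triples generate only a finite-index sublattice of the rank-two kernel, so from a fixed integer seed the shifts $s\vec{P}+t\vec{Q}$ reach only one coset and may never meet the positive octant; and the alternative of engineering a good seed (confining $a_1,a_2$ to a small box and solving for $a_3$ modulo $w_k$) is itself a nontrivial covering claim that is not proved. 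Nor is any derivation of the constant $0.2$ given. So as it stands the heart of the argument is a plan, not a proof.

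The paper closes exactly this gap with a short descent argument (Theorem~\ref{T:sarah}, a Killingbergtro-type bound) that sidesteps the lattice-index issue entirely: among positive triples $(x,y,z)$ with $p_kx+q_ky+r_kz\equiv n\pmod{r_k}$, take one minimizing $p_kx+q_ky+r_kz$; rewriting $p_kx+q_ky+r_kz-r_k$ in three ways using the two sign-patterned relations $a_1p_k=(-a_2)q_k+(-a_3)r_k$ and $b_2q_k=(-b_1)p_k+(-b_3)r_k$, minimality forces $x\le a_1$, $y\le b_2$, $z\le 1$, hence the minimum is at most $a_1p_k+b_2q_k+r_k$, and one then adds multiples of $r_k$ to $z$ to reach $n$. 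Combined with the explicit bounds $a_1<0.81\phi^{k/2}$, $b_2<0.64\phi^{k/2}$ from Lemma~\ref{L:yates} and $p_k\le q_k\le r_k<0.11\phi^k$, this gives the threshold $a_1p_k+b_2q_k+r_k<0.2\phi^{3k/2}$. To complete your proposal you would either need to prove such a mod-$r_k$ minimality lemma yourself or establish your covering claim with explicit constants; the former is the efficient route, and without one of them the proof is incomplete.
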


This will be all that is required.

\begin{proof}[Proof of Theorem~\ref{T:barbara}]
There is no sequence of length $t(n)+1$ terminating at $n$.
Hence by Theorem~\ref{T:ian}, it follows that 
$n<0.2\phi^{3(t(n)+1)/2}=0.2\phi^{3/2}\phi^{3t(n)/2}$.

Thus from Theorem~\ref{T:susan}, it follows that there are at most 
\begin{align*}
\lceil1500\frac{n}{\phi^{3t(n)/2}}\rceil^2
&\le\lceil1500\frac{0.2\phi^{3/2}\phi^{3t(n)/2}}
{\phi^{3t(n)/2}}\rceil^2\\
&\le\lceil300\phi^{3/2}\rceil^2=749^2=561001
\end{align*}
positive tribonacci sequences of length $t(n)$ terminating at $n$.
\end{proof}

In Section~\ref{S:silurian}, we will investigate which recurrence relations of the form
$x_{n}=ax_{n-1}+bx_{n-2}+cx_{n-3}$ for non-negative $a$ and $b$ and for positive $c$ the arguments
in this paper can be carried across to. We will extend the result in the earlier sections to the
following case.

\begin{thm}\label{T:foreman}
Suppose $a,b,c$ are non-negative integers with $a+b>0$, $c=1$ and such that $x^3-ax^2-bx-c=0$ has exactly
one real root. Then there is an absolute bound $T$ such that if positive integers $k\ge 4$ and $n$ are
such that there are no positive sequences $\langle a_i\rangle_{i=1}^{k+1}$ satisfying the
recurrence relation $a_i=aa_{i-1}+ba_{i-2}+ca_{i-3}$ of length $k+1$ terminating at $n$, then there
are at most $T$ such sequences of length $k$ terminating at $n$.
\end{thm}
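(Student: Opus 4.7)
The plan is to reprise the proof of Theorem~\ref{T:barbara} with $\phi$ replaced throughout by the unique real root $\lambda$ of $x^3 - ax^2 - bx - 1 = 0$. Since this polynomial has exactly one real root, the other two roots $\mu,\bar\mu$ form a complex conjugate pair, and Vieta's formulas give $\lambda\cdot|\mu|^2 = c = 1$, whence $|\mu| = \lambda^{-1/2}$. The hypothesis $a+b>0$ with $c=1$ makes the polynomial negative at $x=1$, so $\lambda > 1$ and $|\mu| < 1$. These magnitude facts are essentially the only analytic properties of $\phi$ used in Sections~\ref{S:dalek} and~\ref{S:cyberman}; one also needs the characteristic polynomial to be irreducible over $\Q$, which holds because the hypothesis of a unique real root together with $c=1$ excludes $\pm 1$ as rational roots.

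Concretely, I plan to establish three analogues: (i) of Section~\ref{S:dalek}, that any non-zero integer sequence $\langle a_i\rangle_{i=1}^{n}$ of the recurrence with $a_n=0$ has $\max(|a_1|,|a_2|,|a_3|)\ge c_1\lambda^{n/2}$; (ii) of Theorem~\ref{T:susan}, that the number of positive such sequences of length $k$ terminating at $n$ is at most $O((n/\lambda^{3k/2})^2)$; and (iii) of Theorem~\ref{T:ian}, that every integer $n>c_3\lambda^{3k/2}$ is terminated by some positive sequence of length $k$. Given these, the proof of Theorem~\ref{T:foreman} proceeds verbatim as that of Theorem~\ref{T:barbara}: the absence of a positive sequence of length $k+1$ terminating at $n$ forces $n\le c_3\lambda^{3(k+1)/2}$, and substituting into (ii) yields the absolute bound $T=O((c_3\lambda^{3/2})^2)$, depending only on $a,b$.

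For (i), I would decompose $a_i = A\lambda^i + 2\,\Re(B\mu^i)$ along the roots; $a_n=0$ forces $|A|\le 2|B|\lambda^{-3n/2}$, while expressing $A$ as a $\Q$-linearly-independent (by irreducibility) integer combination of $a_1,a_2,a_3$ furnishes a Diophantine lower bound $|A|\gtrsim \|(a_1,a_2,a_3)\|^{-2}$; combined with $|B|\lesssim\|(a_1,a_2,a_3)\|$ this yields the claim. For (ii), the lattice-counting argument of Section~\ref{S:dalek} transports unchanged, since the positive orthant on the hyperplane $\{a_k=n\}$ is a planar region of area $O(n^2/\lambda^{2k})$ and (i) shows consecutive positive integer solutions differ by a lattice vector of norm $\ge c_1\lambda^{k/2}$. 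The step I expect to be hardest is (iii); the construction of Section~\ref{S:cyberman} uses explicit tribonacci-specific witnesses, whereas for general $(a,b)$ I would instead apply Minkowski's theorem to the 2-dimensional kernel of the evaluation map $a\mapsto a_n$ to produce two short kernel vectors of norm $O(\lambda^{n/2})$, argue (by continuity in the parameter $\lambda$, or by explicit case analysis on small $a,b$) that their sign patterns on $(a_1,a_2,a_3)$ can be arranged to cover a sufficient set of sign configurations, and then invoke a Frobenius-number bound for the resulting two-generator numerical semigroup to conclude that every integer $n>c_3\lambda^{3k/2}$ is representable by a positive sequence.
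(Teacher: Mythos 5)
Your overall skeleton is the same as the paper's Section~\ref{S:silurian}: prove an analogue of Corollary~\ref{C:polly} for the kernel of the length-$k$ evaluation map, deduce an analogue of Theorem~\ref{T:susan} (this is Theorem~\ref{T:susan2}), prove an analogue of Theorem~\ref{T:ian} (this is Theorem~\ref{T:ian2}), and combine exactly as in Theorem~\ref{T:barbara}. Your step (i), via the decomposition $a_i=A\lambda^i+2\Re(B\mu^i)$ and the rational norm form $AB\bar B$ (using irreducibility, which you justify correctly), is a genuinely different and perfectly serviceable route to the separation bound: the paper instead uses the compactness statement of Lemma~\ref{L:kamelion} together with the trigonometric Lemma~\ref{L:zoe} to get Lemma~\ref{L:polly2}. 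Your step (ii), packing triples separated by $\gtrsim\lambda^{k/2}$ inside the positivity triangle of diameter $O(n/\lambda^{k})$, matches Theorem~\ref{T:susan2} in substance.

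The gap is in step (iii), which you yourself flag as the hardest, and it is exactly where the paper does its real work. Minkowski's theorem gives two kernel vectors of norm $O(\lambda^{k/2})$ but says nothing about their sign patterns, and the sign patterns are not a side issue: Theorem~\ref{T:sarah} needs relations of the precise shapes $a_1p_k=(-a_2)q_k+(-a_3)r_k$ with $a_1>0\ge a_2$, $a_3<0$, and $b_2q_k=(-b_1)p_k+(-b_3)r_k$ with $b_2>0\ge b_1$, $b_3<0$. ``Continuity in $\lambda$'' or ``case analysis on small $a,b$'' cannot supply these, because for a fixed recurrence you need such vectors for every length $k$, and the sign pattern of a short kernel vector rotates with $k$ through essentially equidistributed phases; the paper handles this with Corollary~\ref{C:peri} (the phase $\gamma$ can be forced into any prescribed interval by integer choices of $k,l$) together with Lemmas~\ref{L:yates2}--\ref{L:yates5}, which exploit $\tfrac{\pi}{2}<\delta<\pi$ to obtain kernel vectors of size $O(\eta_1^{k/2})$ with the two required sign configurations. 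Moreover, your closing appeal to ``a Frobenius-number bound for the resulting two-generator numerical semigroup'' is the wrong tool: two generators of size $\asymp\lambda^{k}$ have Frobenius number $\asymp\lambda^{2k}$, and a representability threshold of order $\lambda^{2k}$ is useless here --- plugging $n\lesssim\lambda^{2(k+1)}$ into (ii) gives a bound growing with $k$, so the whole argument hinges on obtaining the threshold $O(\lambda^{3k/2})$, i.e.\ on representing $n$ with all three of $p_k,q_k,r_k$ having positive coefficients, which is what the sign-constrained three-generator bound of Theorem~\ref{T:sarah} delivers. (A lattice-theoretic repair is available --- your (i) plus the covolume $\asymp\lambda^{k}$ of the kernel lattice forces both successive minima to be $\Theta(\lambda^{k/2})$, so a covering-radius argument moves any integer solution of $xp_k+yq_k+zr_k=n$ into the positivity triangle once $n\gtrsim\lambda^{3k/2}$ --- but that argument is not the one you wrote, and as stated your (iii) does not prove Theorem~\ref{T:foreman}.)
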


We will leave open the question of which linear recurrences satisfy this property, but will at least demonstrate an example of a recurrence that does not. In particular we will show the existence of positive
integers $k$ and $n$ such that there is no positive sequence $\langle a_i\rangle_{i=1}^{k+1}$
satisfying the recurrence relation $a_i=a_{i-1}+a_{i-2}+2a_{i-3}$ of length $k+1$ terminating at $n$,
but for which the number of such sequences of length $k$ terminating at $n$ is unbounded.

\section{Lower Bound}\label{S:dalek}
Let us say a sequence $\langle a_i\rangle_{i=1}^\infty$ is a {\em reverse-tribonacci} sequence if for all 
$i\ge 0$, $a_i=a_{i+1}+a_{i+2}+a_{i+3}$. Let us write out the
expression for the reverse-tribonacci sequence starting 
$\langle 0, k, l\rangle$. Recall that $\phi$ is the real solution
to $\phi^3=\phi^2+\phi+1$. We write the complex roots as 
$\phi_1$ and $\phi_2=\overline{\phi_1}$.

\begin{lem}\label{L:benson}
For all integers $k, l$, 
if $\langle a_i\rangle_{i=1}^{\infty}$ is a reverse-tribonacci
sequence with $a_1=0$, $a_2=k$ and $a_3=l$, then for all
$i$, $a_i$ can be expressed as

\begin{align*}
a_i&=\alpha\phi^{-i}+(k\psi_1+l\zeta_1)\phi_1^{-i}+
(k\psi_2+l\zeta_2)\phi_2^{-i}\\
&=(\alpha\phi^{-3i/2}+\beta\cos(\gamma-\delta i))\phi^{i/2},
\end{align*}
where 
\begin{align*}
\psi_1&=\frac{\phi_1^3+\phi_1^2}{\phi_1^2+2\phi_1+3}\\
\psi_2&=\overline{\psi_1}\\
\zeta_1&=\frac{\phi_1^3}{\phi_1^2+2\phi_1+3}\\
\zeta_2&=\overline{\zeta_1}\\
\alpha&=\frac{k\phi^2+(k+l)\phi^3}{\phi^2+2\phi+3}\\
\beta e^{\gamma i}&=2(k\psi_1+l\zeta_1)\textrm{ and}\\
e^{\delta i}&=\phi_1\sqrt{\phi}.
\end{align*}
\end{lem}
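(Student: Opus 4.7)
The plan is standard: diagonalize the reverse recurrence using its characteristic polynomial, solve for the three coefficients from the initial conditions, and then rewrite the complex conjugate pair as a real cosine.

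First, the ansatz $a_i = r^{-i}$ in $a_i = a_{i+1} + a_{i+2} + a_{i+3}$ yields, after dividing by $r^{-i-3}$, the equation $r^3 = r^2 + r + 1$. Hence the three characteristic roots are exactly $\phi, \phi_1, \phi_2$, and the general solution is $a_i = A\phi^{-i} + B\phi_1^{-i} + C\phi_2^{-i}$ for constants $A, B, C \in \C$. Imposing $a_1 = 0$, $a_2 = k$, $a_3 = l$ gives a Vandermonde-type $3 \times 3$ linear system in $A, B, C$.

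Next, set $u = \phi^{-1}$, $v = \phi_1^{-1}$, $w = \phi_2^{-1}$. Cramer's rule (after factoring $uvw$ from numerator and denominator of $A$) gives
\[A = \frac{l - k(v+w)}{u(v-u)(w-u)}.\]
Vieta's formulas on $x^3 = x^2 + x + 1$ yield $\phi_1 + \phi_2 = 1 - \phi$ and $\phi_1 \phi_2 = 1/\phi$, hence $v + w = \phi - \phi^2$ and $vw = \phi$. A short calculation, reducing $\phi^{-3}$ via the identity $1 = \phi^{-1} + \phi^{-2} + \phi^{-3}$, produces $u(v-u)(w-u) = (\phi^2 + 2\phi + 3)/\phi^3$. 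After using $\phi^3 = \phi^2 + \phi + 1$ once more to tidy the numerator this collapses to the stated formula $\alpha = (k\phi^2 + (k+l)\phi^3)/(\phi^2 + 2\phi + 3)$. The formulas $B = k\psi_1 + l\zeta_1$ and $C = k\psi_2 + l\zeta_2$ follow from the analogous computation with $\phi$ replaced by $\phi_1$ or $\phi_2$; conjugate symmetry between $\phi_1$ and $\phi_2$ gives $C = \overline{B}$ for free.

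For the second equality, the key identity is $|\phi_1|^2 = \phi_1\overline{\phi_1} = \phi_1\phi_2 = 1/\phi$, so $\phi_1$ lies on the circle of radius $\phi^{-1/2}$ in $\C$ and may be written $\phi_1 = \phi^{-1/2}e^{\theta\sqrt{-1}}$ for some real $\theta$. Then $\phi_1\sqrt{\phi} = e^{\theta\sqrt{-1}}$, matching the definition of $\delta$ (so $\delta = \theta$ modulo $2\pi$), and $\phi_1^{-i} = \phi^{i/2}e^{-\theta i\sqrt{-1}}$. Combining the two complex conjugate terms, $(k\psi_1 + l\zeta_1)\phi_1^{-i} + (k\psi_2 + l\zeta_2)\phi_2^{-i} = 2\Re\!\left[(k\psi_1 + l\zeta_1)\phi_1^{-i}\right]$; writing the inner factor in polar form as $\tfrac{\beta}{2}e^{\gamma\sqrt{-1}}$ this equals $\beta\phi^{i/2}\cos(\gamma - \delta i)$. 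Factoring $\phi^{i/2}$ out of $\alpha\phi^{-i} = \alpha\phi^{-3i/2}\cdot\phi^{i/2}$ as well yields the second form. The main obstacle is simply the algebraic bookkeeping in the Cramer step, where two separate reductions by the minimal polynomial are needed and one must keep careful track of which $i$ is the sequence index and which is the imaginary unit; beyond that, nothing deeper than Cramer's rule and Euler's formula is required.
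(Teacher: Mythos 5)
Your proposal is correct and follows essentially the same route as the paper: express the reverse-tribonacci sequence as a combination of $\phi^{-i}$, $\phi_1^{-i}$, $\phi_2^{-i}$ and solve the $3\times3$ system for the coefficients, then pair the conjugate terms into a cosine. You simply carry out explicitly (via Cramer's rule, Vieta, the reduction $1=\phi^{-1}+\phi^{-2}+\phi^{-3}$, and polar form) the computation that the paper's proof leaves as ``solving for $p,q,r$ leads to the above expression,'' and your algebra checks out.
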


\begin{proof}
Any
two-way infinite tribonacci sequence $\langle a_i\rangle_{-\infty}^{\infty}$ can be written as
$a_i=p\phi^i+q\phi_1^i+r\phi_2^i$ for some $p, q$ and $r$.

Thus any reverse-tribonacci sequence $\langle a_i\rangle_{-\infty}^{\infty}$ can be written as 
$a_i=p\phi^{-i}+q\phi_1^{-i}+r\phi_2^{-i}$ for some $p$, $q$, $r$. Solving for the 
$p, q, r$ that give $a_1=0$, $a_2=k$ and $a_3=l$ leads to the above expression.
\end{proof}

Note that in the above expressions, $\psi_1$, $\psi_2$,
$\zeta_1$, $\zeta_2$ and $\delta$ are constants that do not
depend on $k$ and $l$.

\begin{lem}\label{L:victoria}
For any integers $k$ and $l$, if $\alpha$ and $\beta$ are
defined as in Lemma~\ref{L:benson}, then 
$|\alpha|\le|k|+|l|$ and
$\beta\ge\frac{|k|+|l|}{31}$.
\end{lem}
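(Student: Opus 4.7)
The two bounds are handled separately.

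For $|\alpha|\le|k|+|l|$, I substitute $\phi^3=\phi^2+\phi+1$ into the formula from Lemma~\ref{L:benson} to write $\alpha = uk+vl$ with
\[u=\frac{2\phi^2+\phi+1}{\phi^2+2\phi+3},\qquad v=\frac{\phi^2+\phi+1}{\phi^2+2\phi+3}.\]
Both $u,v$ have positive numerator and denominator. The bound $u\le 1$ reduces to $\phi^2-\phi-2=(\phi-2)(\phi+1)\le 0$, which holds since $1<\phi<2$, and $v\le 1$ to the trivial $\phi+2\ge 0$. Hence $|\alpha|\le u|k|+v|l|\le|k|+|l|$.

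For $\beta\ge(|k|+|l|)/31$, write $\psi_1=A+Bi$ and $\zeta_1=C+Di$ with $A,B,C,D\in\R$. Then $|k\psi_1+l\zeta_1|^2=(Ak+Cl)^2+(Bk+Dl)^2$ is a real quadratic form in $(k,l)$ with symmetric matrix
\[M=\begin{pmatrix}|\psi_1|^2 & \operatorname{Re}(\psi_1\overline{\zeta_1})\\ \operatorname{Re}(\psi_1\overline{\zeta_1}) & |\zeta_1|^2\end{pmatrix},\]
whose determinant equals $(\operatorname{Im}(\psi_1\overline{\zeta_1}))^2$. This is strictly positive because $\psi_1/\zeta_1=1+\phi_1^{-1}$ is non-real (since $\phi_1\notin\R$), so $M$ is positive definite; let $\lambda>0$ be its smaller eigenvalue. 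Then $|k\psi_1+l\zeta_1|^2\ge\lambda(k^2+l^2)\ge(\lambda/2)(|k|+|l|)^2$ using $k^2+l^2\ge(|k|+|l|)^2/2$, so $\beta=2|k\psi_1+l\zeta_1|\ge\sqrt{2\lambda}\,(|k|+|l|)$, and it suffices to show $2\lambda\ge 1/31^2$.

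To produce an explicit lower bound on $\lambda$ I would use $\lambda\ge\det M/\operatorname{tr}M$, reducing the problem to
\[(\operatorname{Im}(\psi_1\overline{\zeta_1}))^2\ge\frac{|\psi_1|^2+|\zeta_1|^2}{1922}.\]
Each of these three quantities is symmetric in $\phi_1,\phi_2$, so by Vieta ($\phi_1+\phi_2=1-\phi$, $\phi_1\phi_2=1/\phi$, obtained from $x^3-x^2-x-1=(x-\phi)(x^2+(\phi-1)x+1/\phi)$) each reduces to an explicit rational in $\phi$; for instance $|T|^2=p(\phi_1)p(\phi_2)$ with $T=\phi_1^2+2\phi_1+3$ and $p(x)=x^2+2x+3$. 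Evaluating with $\phi\in(1.839,1.840)$ yields the inequality with ample slack --- numerically $\sqrt{2\lambda}\approx 0.2$, well above $1/31\approx 0.032$. The main obstacle is simply the symmetric-function bookkeeping in this final step; the generous constant $31$ in the statement is chosen precisely so that crude numerical bounds on $\phi$ suffice in place of exact algebraic identities.
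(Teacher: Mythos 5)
Your proposal is correct, but it takes a genuinely different route from the paper for the $\beta$ bound. The paper's proof is an informal numerical check: it quotes decimal approximations of $\psi_1$ and $\zeta_1$, handles $k,l\ge 0$ by bounding the real part of $2(k\psi_1+l\zeta_1)$, and for mixed signs simply asserts a numerically computed minimum of $\beta/(k-l)$ (its quoted figures $0.03221$ and $k=-0.3653l$ in fact look like the smallest eigenvalue and eigendirection of exactly your Gram matrix $M$ rather than the minimum of $\beta/(k-l)$ itself, which is about $0.26$ --- so the paper's numbers are garbled but err on the safe side). Your argument replaces this sign-by-sign case analysis with the positive-definiteness of the quadratic form $|k\psi_1+l\zeta_1|^2$: the identity $\det M=(\operatorname{Im}(\psi_1\overline{\zeta_1}))^2$, the observation that $\psi_1/\zeta_1=1+\phi_1^{-1}$ is non-real, the bound $\lambda_{\min}\ge\det M/\operatorname{tr}M$, and $k^2+l^2\ge(|k|+|l|)^2/2$ are all correct, and they reduce the lemma to a single explicit inequality between algebraic numbers that holds with a margin of roughly a factor of $38$, so interval bounds with $1.839<\phi<1.840$ indeed suffice; your route also yields a uniform constant ($\approx 0.19$, and the truth is $\approx 0.25$) much better than $1/31$. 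Your treatment of $|\alpha|\le|k|+|l|$ is likewise cleaner than the paper's (exact algebra via $\phi^3=\phi^2+\phi+1$ and $1<\phi<2$ instead of a decimal approximation). The only caveat is that you sketch rather than execute the final symmetric-function/interval computation, but the paper's own proof is no more explicit at the corresponding step, and your reduction makes the remaining verification routine.
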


\begin{proof}
$\alpha$ is roughly $0.9546k+0.6184l$, which is clearly bounded above by $|k|+|l|$. 
$\psi_1$ is roughly $0.02267-0.217i$ and $\zeta_1$ is roughly $0.1908-0.0187i$. As such, if 
$k$ and $l$ are non-negative then the real part of $2(k\psi_1+l\zeta_1)$ (and hence $\beta$)
is at least $0.04(k+l)>\frac{|k|+|l|}{31}$.

For $k$ positive and $l$ negative, the minimum value of $\frac{\beta}{k-l}$ is approximately
$0.03221>\frac{1}{31}$, and is achieved around $k=-0.3653l$.
\end{proof}

Finally we need a simple trigonometric property

\begin{lem}\label{L:zoe}
For any real numbers $p$ and $q$ with $\frac{\pi}2<q<\pi$,
the larger of $|\cos(p)|$ and $|\cos(p+q)|$ is at least $\cos(q/2)$.
\end{lem}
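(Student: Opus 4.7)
The plan is to exploit the $\pi$-periodicity of $|\cos(\cdot)|$ together with a short case analysis. Observe first that $|\cos(x)|\ge\cos(q/2)$ holds precisely when $x$ lies within distance $q/2$ of some integer multiple of $\pi$; equivalently, $|\cos(x)|<\cos(q/2)$ holds exactly when $x\pmod\pi$ lies in the open interval $(q/2,\pi-q/2)$, which has length $\pi-q$. Since $q>\pi/2$, this length is strictly less than $q$, and this is the only way in which the hypothesis $q>\pi/2$ will be used.

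I would then argue by contradiction: suppose that both $|\cos(p)|<\cos(q/2)$ and $|\cos(p+q)|<\cos(q/2)$. There must then exist integers $k$ and $m$ with $p-k\pi\in(q/2,\pi-q/2)$ and $(p+q)-m\pi\in(q/2,\pi-q/2)$. Subtracting and setting $j=m-k$ yields
\[
q-j\pi\in(q-\pi,\,\pi-q),\qquad\text{i.e.,}\qquad |q-j\pi|<\pi-q.
\]
The remainder is a brief case analysis on the integer $j$. For $j=0$ the condition reduces to $q<\pi-q$, contradicting $q>\pi/2$. For $j=1$, since $q<\pi$ we have $|q-\pi|=\pi-q$, so strict inequality fails. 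For $j=-1$ we need $q+\pi<\pi-q$, i.e., $q<0$, which is false. For $|j|\ge 2$ we have $|q-j\pi|\ge 2\pi-q>\pi-q$. In every case we reach a contradiction, so at least one of $|\cos(p)|$ and $|\cos(p+q)|$ must be at least $\cos(q/2)$.

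There is no real obstacle here; the argument is essentially bookkeeping. The intuitive picture is that the ``bad'' intervals on which $|\cos|$ is small have length $\pi-q$, while $p$ and $p+q$ are separated by exactly $q$, which is simultaneously too large to allow both to sit in a single bad interval and too far from a multiple of $\pi$ to let them sit in two consecutive bad intervals. The hypothesis $q>\pi/2$ is precisely what forces both obstructions to hold at once.
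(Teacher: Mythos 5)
Your argument is correct, but it follows a genuinely different route from the paper. The paper's proof is a two-line identity computation: using $2\cos^2(p)+2\cos^2(p+q)=\cos(2p)+\cos(2p+2q)+2=2\cos(2p+q)\cos(q)+2$ and the fact that $\cos(q)<0$, it bounds the sum of squares below by $2+2\cos(q)=4\cos^2(q/2)$, so at least one of the two squares must be at least $\cos^2(q/2)$. You instead work with the level sets of $|\cos|$: the ``bad'' set where $|\cos(x)|<\cos(q/2)$ is, modulo $\pi$, an open interval of length $\pi-q$ centred between consecutive multiples of $\pi$, and your case analysis on the integer $j=m-k$ shows that two points at distance exactly $q$ cannot both land in bad intervals (the $j=0$ case uses $q>\pi/2$, the $j=1$ case uses $q<\pi$ together with the openness of the intervals, and $|j|\ge 2$ is impossible on size grounds). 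All the small checks go through: the characterization of the bad set is right because $q/2\in(\pi/4,\pi/2)$ so $\cos(q/2)\in(0,1)$, the target interval $(q-\pi,\pi-q)$ is indeed symmetric so your absolute-value reformulation is legitimate, and since the contradiction hypothesis is strict, the borderline case $j=1$ (where $|q-\pi|=\pi-q$ exactly) is correctly ruled out. The trade-off: the paper's identity argument is shorter and self-contained, while yours is more elementary (no product-to-sum manipulation), makes the sharpness of the constant $\cos(q/2)$ transparent (equality exactly when $p$ and $p+q$ sit at the endpoints of a bad interval), and clarifies why the hypothesis $\frac{\pi}{2}<q<\pi$ is exactly what is needed.
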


\begin{proof}
Note that $\cos(q)<0$. Thus
\begin{align*}
2(\cos(p)^2+\cos(p+q)^2)&=2\cos(p)^2+2\cos(p+q)^2\\
&=\cos(2p)+\cos(2p+2q)+2\\
&=2\cos(2p+q)\cos(q)+2\\
&\ge2+2\cos(q)\\
&=4\cos(q/2)^2.
\end{align*}

Thus either $\cos(p)^2\ge\cos(q/2)^2$ or $\cos(p+q)^2\ge\cos(q/2)^2$.
\end{proof}

This allows us to put a lower bound on the size of at least one of each consecutive pair of a
reverse-tribonacci sequence.

\begin{cor}\label{C:jamie}
Given a non-zero integer reverse-tribonacci sequence 
$\langle a_i\rangle_{i=1}^{\infty}$ with $a_1=0$, for every integer $n\ge 2$, either $|a_n|>0.01\phi^{n/2}$ or 
$|a_{n+1}|>0.01\phi^{(n+1)/2}$ (or both).
\end{cor}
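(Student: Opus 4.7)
The plan is to split on the size of $n$ and combine the explicit formula of Lemma~\ref{L:benson} with the sign and magnitude bounds already in hand. Writing
\[b_m:=a_m/\phi^{m/2}=\alpha\phi^{-3m/2}+\beta\cos(\gamma-\delta m),\]
the claim reduces to $\max(|b_n|,|b_{n+1}|)>0.01$. Set $M:=|k|+|l|$; since $a_1=0$ and the sequence is a non-zero integer sequence, $M\ge 1$, and Lemma~\ref{L:victoria} supplies $|\alpha|\le M$ and $\beta\ge M/31$.

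The first substantive step is to verify $\pi/2<\delta<\pi$ so that Lemma~\ref{L:zoe} applies. Since $\sqrt\phi>0$ one has $\delta=\arg\phi_1$; the identities $\phi_1+\overline{\phi_1}=1-\phi$ and $\phi_1\overline{\phi_1}=1/\phi$ force $\operatorname{Re}\phi_1=(1-\phi)/2<0$ and (after fixing $\phi_1$ to have positive imaginary part) $\operatorname{Im}\phi_1>0$, placing $\phi_1$ in the open second quadrant. Numerically $\delta\approx 2.176$, so $\cos(\delta/2)\approx 0.4627$.

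Granted this, Lemma~\ref{L:zoe} (with the shift between consecutive indices taken as $\delta$) yields some $m\in\{n,n+1\}$ with $|\cos(\gamma-\delta m)|\ge\cos(\delta/2)$, and for that $m$
\[|b_m|\ge\beta\cos(\delta/2)-|\alpha|\phi^{-3m/2}\ge M\!\left(\tfrac{\cos(\delta/2)}{31}-\phi^{-3n/2}\right).\]
For $n\ge 6$ this already exceeds $0.01$, since $\phi^{-9}<0.0042$ while $\cos(\delta/2)/31>0.0149$, completing this range.

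The remaining obstacle, and the only delicate part, is the small range $n\in\{2,3,4,5\}$, where $\phi^{-3n/2}$ is too large for the analytic inequality to give anything useful. For these I would argue by integrality: the threshold $0.01\phi^{(n+1)/2}$ is at most $0.01\phi^3<0.063<1$, so it suffices to show that $a_n$ and $a_{n+1}$ cannot both vanish. A direct computation of the first few terms (e.g.\ $a_4=-k-l$, $a_5=2k$, $a_6=2l-k$) shows that for each $n\in\{2,3,4,5\}$ the pair $(a_n,a_{n+1})$ is a linearly independent pair of forms in $(k,l)$, so $a_n=a_{n+1}=0$ would force $k=l=0$, contradicting the non-zero hypothesis.
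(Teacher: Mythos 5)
Your proposal is correct and follows essentially the same route as the paper: the same decomposition $a_m/\phi^{m/2}=\alpha\phi^{-3m/2}+\beta\cos(\gamma-\delta m)$ from Lemma~\ref{L:benson}, the bounds of Lemma~\ref{L:victoria}, Lemma~\ref{L:zoe} applied to a consecutive pair of indices for large $n$, and an integrality argument (two consecutive zeros force $k=l=0$) for small $n$. The only differences are cosmetic: you push the analytic estimate down to $n\ge 6$ and verify the small cases by linear independence of the forms $(a_n,a_{n+1})$ in $(k,l)$, whereas the paper switches over at $n\ge 15$ and argues that two consecutive zeros annihilate the whole sequence.
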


\begin{proof}
For $n\ge 2$ if $a_n$ and $a_{n+1}$ are both 0, then $a_1$ is the same sign as $a_{n-1}$. Since 
$a_1=0$, it follows that the entire series must be 0. Since the sequence is non-zero, it follows
that either $|a_n|\ge 1$ or $|a_{n+1}\ge 1$. Since $1>0.01\phi^{n/2}$ for $n\le 15$, we have proved 
the statement for $n\le 14$. Thus we may assume $n\ge 15$.

By Lemma~\ref{L:benson}, $\frac{a_i}{\phi^{i/2}}$ can be written as 
$\alpha\phi^{-3i/2}+\beta\cos(\gamma-\delta i)$. 

Now by Lemma~\ref{L:zoe}, at least one of $|\cos(\gamma-\delta n)|$ and $|\cos(\gamma-\delta (n-1))|$
is at least $\cos(\delta/2)$ ($\delta=2.176$ is between $\frac{\pi}2$ and $\pi$). Let $t$ be the
choice from $\{n-1,n\}$ that maximises $|\cos(\gamma-\delta t)|$.

By Lemma~\ref{L:victoria}, if
we write $\alpha'=\frac{\alpha}{|k|+|l|}$ and $\beta'=\frac{\beta}{|k|+|l|}$ then $|\alpha'|\le 1$
and $\beta'>\frac{1}{31}$.

Therefore
\begin{align*}
|\frac{a_t}{\phi^{t/2}}|&=|\alpha\phi^{-3t/2}+\beta\cos(\gamma-\delta t)|\\
&\ge|\alpha'\phi^{-3t/2}+\beta'\cos(\gamma-\delta t)|\\
&\ge|\beta'\cos(\gamma-\delta t)|-|\alpha'\phi^{-3t/2}|\\
&\ge\frac{1}{31}\cos(\delta/2)-\phi^{-3t/2}\ge\frac{\cos(\delta/2)}{31}-\phi^{-22.5}>0.01.
\end{align*}
\end{proof}

Then we have a bound on the size of tribonacci sequences terminating at 0.

\begin{cor}\label{C:polly}
For $n\ge 3$, if $\langle a_i\rangle_{i=1}^n$ is a non-zero integer tribonacci sequence 
terminating at 0 then either $|a_1|>0.01\phi^{n/2}$ or
$|a_2|>0.01\phi^{(n-1)/2}$ (or both).
\end{cor}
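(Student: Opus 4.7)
The plan is to reduce this corollary directly to Corollary~\ref{C:jamie} by reversing the sequence. Given a non-zero integer tribonacci sequence $\langle a_i\rangle_{i=1}^n$ with $a_n=0$, I would define $b_j=a_{n+1-j}$ for $1\le j\le n$. A quick index substitution shows that the forward tribonacci recurrence $a_i=a_{i-1}+a_{i-2}+a_{i-3}$ translates, under this reversal, to $b_j=b_{j+1}+b_{j+2}+b_{j+3}$, i.e.\ exactly the reverse-tribonacci recurrence. In particular $b_1=a_n=0$, $b_{n-1}=a_2$ and $b_n=a_1$.

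Next I would extend $\langle b_j\rangle_{j=1}^n$ to an infinite sequence by iterating $b_{j+3}=b_j-b_{j+1}-b_{j+2}$ for $j\ge n-2$. Since $b_1,\dots,b_n$ are integers, the extension remains integer-valued, and the resulting infinite sequence is a reverse-tribonacci sequence with $b_1=0$. If the extension were identically zero then $a_1=\dots=a_n=0$, contradicting the non-zero hypothesis on the original sequence, so the extended sequence is a non-zero integer reverse-tribonacci sequence with $b_1=0$, meeting all hypotheses of Corollary~\ref{C:jamie}.

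Finally I would invoke Corollary~\ref{C:jamie} at index $m=n-1$ (which satisfies $m\ge 2$ because $n\ge 3$). Its conclusion is that either $|b_{n-1}|>0.01\phi^{(n-1)/2}$ or $|b_n|>0.01\phi^{n/2}$. Translating back through $b_{n-1}=a_2$ and $b_n=a_1$ yields exactly the statement: either $|a_2|>0.01\phi^{(n-1)/2}$ or $|a_1|>0.01\phi^{n/2}$.

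This proof is essentially bookkeeping, so I do not anticipate a real obstacle. The only subtlety worth double-checking is the index matching in the reversal (making sure that what Corollary~\ref{C:jamie} gives about positions $m$ and $m+1$ of the reverse-tribonacci sequence lands on $a_2$ and $a_1$ of the original, not on some other pair), and that the backwards extension of $b$ beyond position $n$ preserves integrality, both of which are immediate.
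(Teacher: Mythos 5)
Your proposal is correct and is exactly the paper's argument: reverse the sequence to obtain a reverse-tribonacci sequence starting at $0$ (the paper phrases this as taking the reverse-tribonacci sequence with initial terms $0, a_{n-1}, a_{n-2}$, so that $a_i=b_{n+1-i}$) and then apply Corollary~\ref{C:jamie} at index $n-1$. The index bookkeeping and the non-vanishing check you include are the same routine details the paper leaves implicit.
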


\begin{proof}
Let $k=a_{n-1}$ and $l=a_{n-2}$. Then if $\langle b_i\rangle_{i=1}^{\infty}$ is the 
reverse-tribonacci sequence with $b_1=0$, $b_2=k$ and $b_3=l$, then $a_i=b_{n+1-i}$ for all 
$1\le i\le n$. Then this is just a restatement of 
Corollary~\ref{C:jamie}.
\end{proof}

This is all we need to prove Theorem~\ref{T:susan}.

\begin{proof}[Proof of Theorem~\ref{T:susan}]
Partition the tribonacci sequences of length 
$k\ge 4$ terminating at $n$ $\langle a_i\rangle_{i=1}^k$ by the 
pair $(\lfloor\frac{a_1}{0.01\phi^{k/2}}\rfloor,
\lfloor\frac{a_2}{0.01\phi^{k-1/2}}\rfloor)$.

If two sequences $\langle a_i\rangle_{i=1}^k$ and 
$\langle b_i\rangle_{i=1}^k$ have the same pair, then
$|a_1-b_1|<0.01\phi^{k/2}$ and $|a_2-b_2|<0.01\phi^{(k-1)/2}$ and
hence, by Corollary~\ref{C:polly}, either 
$\langle a_i-b_i\rangle_{i=1}^k$ is zero everywhere or does
not terminate at 0.

Thus each distinct tribonacci sequence of length $k$ terminating
at $n$ has a distinct pair.

Define tribonacci sequence by $x_1=1, x_2=0, x_3=0$. Then if 
$a_1, a_2, \ldots, a_k$ is a positive tribonacci sequence,
$a_i\ge x_ia_4$ for $i=2, 3$ and $4$ and therefore 
$a_k\ge x_ka_4$. Now $x_k<\phi^k/11$ for all $k\ge 4$ and
hence $a_1+a_2+a_3\le\frac{11n}{\phi^k}$ for all
tribonacci sequences of length $k$ terminating at $n$.

Thus $\lfloor\frac{a_1}{0.01\phi^{k/2}}\rfloor$ is at most
$\frac{1100n}{\phi^{3k/2}}$ and 
$\lfloor\frac{a_2}{0.01\phi^{(k-1)/2}}\rfloor$ is at most
$\frac{1100n}{\phi^{3k-1/2}}<\frac{1500n}{\phi^{3k/2}}$.

It follows that the number of Tribonacci sequences of length
$k\ge 4$ terminating at $n$ is at most
$\lceil\frac{1500n}{\phi^{3k/2}}\rceil^2$.
\end{proof}

Note we have not worked hard here to get the best bound.
In a previous draft we had a much more complicated proof of an
upper bound which showed, in place of Corollary~\ref{C:polly},
that if $\langle a_i\rangle_{i=1}^n$ terminated at 0
then $\sqrt{a_1^2+a_2^2+a_3^2}>0.28\phi^{n/2}$, which led to an
upper bound for the main theorem of 42875.

\section{Upper Bound}\label{S:cyberman}
In this section, we turn to numbers which are not the terminus for any tribonacci sequence of length
$k$, working towards a proof of Theorem~\ref{T:ian}.

To that end, define three infinite tribonacci sequences $\langle p_i\rangle_{i=1}^{\infty},
\langle q_i\rangle_{i=1}^{\infty}$
and $\langle r_i\rangle_{i=1}^{\infty}$ by $(p_1,p_2,p_3)=(1,0,0), (q_1,q_2,q_3)=(0,1,0)$ and 
$(r_1,r_2,r_3)=(0,0,1)$. It is clear that for any tribonacci sequence $\langle a_i\rangle_{i=1}^n$, 
$a_n=a_1p_n+b_1q_n+c_1r_n$. Thus we are simply looking to get an upper bound on the largest number
which cannot be written as a positive integral linear combination of $p_n, q_n$ and $r_n$.
This is called the {\em Frobenius Number} of $p_n, q_n$ and $r_n$.

First let us see that a finite bound does exist.

\begin{lem}
For all $k\ge 1$, $p_k$, $q_k$ and $r_k$ have no non-trivial common divisor.
\end{lem}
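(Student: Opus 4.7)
The plan is to realise the triple $(p_k, q_k, r_k)$ as the top row of an integer matrix of determinant $\pm 1$; any common divisor of that row then divides the determinant and so must be $1$.

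Concretely, let
\[M=\begin{pmatrix}0 & 1 & 0\\ 0 & 0 & 1\\ 1 & 1 & 1\end{pmatrix}\]
be the companion matrix of the tribonacci recurrence, so that left-multiplication by $M$ sends $(a_j, a_{j+1}, a_{j+2})^T$ to $(a_{j+1}, a_{j+2}, a_{j+3})^T$ for any tribonacci sequence $\langle a_i\rangle$. A one-line cofactor expansion gives $\det(M)=1$, so $M$ is a unimodular integer matrix.

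The next step is to check by induction on $k$ that
\[M^{k-1}=\begin{pmatrix}p_k & q_k & r_k\\ p_{k+1} & q_{k+1} & r_{k+1}\\ p_{k+2} & q_{k+2} & r_{k+2}\end{pmatrix}.\]
The base case $k=1$ is immediate since $M^0=I$ coincides with the initial conditions $(p_1,p_2,p_3)=(1,0,0)$, $(q_1,q_2,q_3)=(0,1,0)$, $(r_1,r_2,r_3)=(0,0,1)$ arranged into the three columns. The inductive step is exactly the content of the previous paragraph applied column by column.

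Finally, $\det(M^{k-1})=\det(M)^{k-1}=1$, so expanding this determinant along the first row expresses $1$ as an integer combination of $p_k$, $q_k$, $r_k$. Any common divisor $d$ of $p_k$, $q_k$, $r_k$ must therefore divide $1$, forcing $d=\pm 1$. There is no substantive obstacle here: the only point to watch is to set up the companion matrix with the convention that matches the chosen initial triples to the identity at $k=1$.
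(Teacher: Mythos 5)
Your proof is correct, and it takes a genuinely different route from the paper. The paper argues by contradiction and divisibility propagation: a common divisor $t>1$ of $p_k,q_k,r_k$ would divide the terminus of every tribonacci sequence of length $k$, hence divide $p_{k+l}$ for all $l\ge 0$, and then the backward recurrence $p_i=p_{i+3}-p_{i+2}-p_{i+1}$ pushes the divisibility down to the initial term $1$, a contradiction. You instead package the same underlying fact (that the recurrence is invertible over $\mathbb{Z}$ because its trailing coefficient is $1$) into the unimodularity of the companion matrix: $\det M=1$, the induction identifying $M^{k-1}$ with the matrix whose rows are $(p_j,q_j,r_j)$ for $j=k,k+1,k+2$ is verified correctly column by column, and expanding $\det(M^{k-1})=1$ along the first row yields an explicit integer Bézout identity $p_kC_1+q_kC_2+r_kC_3=1$, which immediately forces $\gcd(p_k,q_k,r_k)=1$. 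Your version is a one-shot argument rather than a two-stage induction, gives slightly more (an explicit certificate of coprimality), and transfers verbatim to the later setting of Section~\ref{S:silurian}, where the companion matrix of $x^3-ax^2-bx-c$ has determinant $\pm c$, so the same argument works exactly when $c=\pm1$ --- which is consonant with the paper's ``affable'' hypothesis $c=1$; the paper's approach, by contrast, needs no linear algebra and stays entirely within elementary divisibility of the sequences themselves.
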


\begin{proof}
If $p_k, q_k$ and $r_k$ had a non-trivial common divisor $t>1$ then $t$ would be a common
divisor of the terminus of every tribonacci sequence of length $k$, from which it would
follow that $t$ would in fact be a common divisor of $p_{k+l}$ for all $l\ge 0$ 
(since $\langle p_{i+l}\rangle_1^k$ is a tribonacci sequence of length $k$).

Then, since $p_i=p_{i+3}-(p_{i+1}+p_{i+2})$, it would follow that $t$ would be a common
divisor of $p_{k-1}, p_{k-2}$ and all the way back to $p_0=1$ by induction, causing a contradiction.
\end{proof}

We will use the following bound, which might be originally due to Killingbergtro.

\begin{thm}\label{T:sarah}
Suppose $p, q$ and $r$ are integers with no non-trivial common divisor and let us suppose 
$ap=bq+cr$ and $dq=ep+fr$ where $a,c,d,f>0$ and $b,e\ge 0$. Then for every integer 
$N\ge ap+dq+r$, $N$ can be written in the form $xp+yq+zr$ for some positive integers $p,q,r$.
\end{thm}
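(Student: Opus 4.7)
The plan is to show that every $N \geq ap + dq + r$ admits a representation $N = xp + yq + zr$ with $x, y, z \geq 1$. The two relations furnish vectors $v_1 = (a, -b, -c)$ and $v_2 = (-e, d, -f)$ in the kernel of the linear form $(x, y, z) \mapsto xp + yq + zr$, and adding integer combinations of $v_1, v_2$ to any integer representation of $N$ produces further integer representations. A useful preliminary is that $ad - be > 0$: multiplying $ap = bq + cr$ by $d$ and $dq = ep + fr$ by $b$ and subtracting yields $(ad - be)p = (cd + bf)r$, whose right-hand side is strictly positive.

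My strategy is to aim for a representation with $1 \leq x \leq a$ and $1 \leq y \leq d$, because then $z = (N - xp - yq)/r \geq (N - ap - dq)/r \geq 1$ is automatic from the hypothesis. Starting from an arbitrary integer representation $(x_0, y_0, z_0)$ of $N$, which exists because $\gcd(p, q, r) = 1$, the adjustments $(x_0, y_0, z_0) + k v_1 + l v_2$ project onto the $(x, y)$-plane as translations by the sublattice $\Lambda = \mathbb{Z}(a, -b) + \mathbb{Z}(-e, d)$ of index $ad - be$ in $\mathbb{Z}^2$. The problem reduces to showing that the box $B = [1, a] \times [1, d] \cap \mathbb{Z}^2$ contains a representative of every coset of $\Lambda$ in $\mathbb{Z}^2$.

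The main obstacle is this covering claim. The box has $ad$ integer points and there are $ad - be$ cosets, so on average each coset has $ad/(ad - be) \geq 1$ representatives, but some cosets could still be missed, and since $\Lambda$ is not axis-aligned when both $b$ and $e$ are positive, no Minkowski-style symmetry argument applies directly. My preferred approach is constructive: given any $(u, v) \in \mathbb{Z}^2$, the unique $l(k)$ making $v - kb + l(k) d \in [1, d]$ is forced by $k$, and the resulting $x(k) = u + ka - l(k) e$ has only two possible single-step increments (either $a$ or $a - e$), with positive mean $(ad - be)/d$, so an intermediate-value argument along $k$ should land $x(k)$ in $[1, a]$. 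A fallback, exploiting the identity $p(a, -e) + q(-b, d) + r(-c, -f) = (0, 0)$ obtained from the two given relations read componentwise, is to view the three positivity conditions $x, y, z \geq 1$ as three half-planes in the $(k, l)$-plane whose normals sum to $(0, 0)$ with weights $p, q, r$; the feasible triangle has size scaling with $N - p - q - r$, and the extra slack $N - ap - dq - r \geq 0$ should force an integer point inside it.
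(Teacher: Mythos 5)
Your main line of argument is correct, and it is genuinely different from the paper's. The paper proves the theorem by a minimal-representative (Ap\'ery-style) argument: it picks positive $x,y,z$ with $px+qy+rz\equiv N\pmod r$ and $px+qy+rz$ minimal, notes that $px+qy+rz-r$ then admits no positive representation, and reads off $x\le a$, $y\le d$, $z\le 1$ from the three rewritings of $px+qy+rz-r$ coming from the trivial relation and from your two syzygies $v_1,v_2$; the hypothesis $N\ge ap+dq+r$ then lets it pad the $z$-coefficient with multiples of $r$. You instead start from an arbitrary integer representation, quotient the $(x,y)$-plane by the syzygy lattice $\Lambda=\mathbb{Z}(a,-b)+\mathbb{Z}(-e,d)$ of index $ad-be$, and reduce everything to the covering claim that every coset of $\Lambda$ meets the box $[1,a]\times[1,d]$, after which $z\ge1$ is automatic. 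This avoids the paper's appeal to the Frobenius fact that all large integers in the residue class have positive representations, at the cost of having to prove the covering claim directly. Both proofs lean implicitly on $p,q,r>0$ (the paper for the existence of the minimum, you for $ad-be>0$, for $xp\le ap$, $yq\le dq$, and for dividing by $r$), which is harmless in the paper's application where $p_n,q_n,r_n>0$.

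Two points need attention. First, in your walk $x(k)=u+ka-l(k)e$, the increment is $a-(l(k+1)-l(k))e$ with $l(k+1)-l(k)\in\{\lfloor b/d\rfloor,\lceil b/d\rceil\}$, so the claim that the steps are ``$a$ or $a-e$'' is only right when $b\le d$. Fortunately all you need is that $l(k)$ is non-decreasing (since $b\ge0$) and $e\ge0$, so every step is at most $a$, while the drift $(ad-be)/d$ is strictly positive, so $x(k)\to\pm\infty$ as $k\to\pm\infty$. Taking the largest $k$ with $x(k)\le0$ gives $x(k+1)\ge1$ and $x(k+1)\le x(k)+a\le a$, which finishes the covering claim cleanly; stated this way there is no residual ``intermediate-value'' vagueness. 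Second, the fallback argument should be dropped: a triangle in the $(k,l)$-plane whose size grows with $N$ can still be thin and contain no lattice point, and the slack $N-ap-dq-r\ge0$ does not by itself force one, so that route does not prove the statement as sketched. With the walk argument made precise as above, your proof is complete and is a nice elementary alternative to the paper's.
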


\begin{proof}
Let $x, y, z$ be positive integers such that $px+qy+rz$ is equivalent to 
$N\pmod r$, but for which $px+qy+rz$ is minimal (such a triple $x, y, z$ exist because,
as is well known, if $p, q$ and $r$ have no non-trivial common divisor then all sufficiently
large integers can be written in the form $px+qy+rz$, and many of these sufficiently large
integers are equivalent to $N\pmod r$.) 

Since $px+qy+rz$ is minimal, $px+qy+rz-r$ cannot be written as a positive linear combination of
$x, y$ and $z$.

Thus in each of the equations
\begin{align*}
px+qy+rz-r&=px&    &+qy    &&+r(z-1)\\
px+qy+rz-r&=p(x-a)&&+q(y+b)&&+r(z+c-1)\\
px+qy+rz-r&=p(x+e)&&+q(y-d)&&+r(z+f-1),
\end{align*}
it must follow that one of the coefficients must not be positive. Two of the coefficients in
each equation are clearly positive, so it follows that $x\le a, y\le d$ and $z\le 1$, so
$px+qy+rz\le pa+qd+r\le N$. Since $N$ and $px+qy+rz$ are equivalent modulo $r$, 
there exists a non-negative
integer $t$ such that $N=px+qy+rz+rt$. Then $N=px+qy+r(z+t)$.
\end{proof}

Therefore, to show that all sufficiently large integers can be written as the terminus of a
tribonacci sequence of length $k$, we just need to find linear combinations of $p_n, q_n$
and $r_n$ combining to 0, with particular signs of the
combinations. This is equivalent to finding tribonacci sequences ending at 0,
which is equivalent to finding reverse-tribonacci sequences starting at 0,
and hence we can again use the expression from Lemma~\ref{L:benson}, which states that if 
$\langle a_i\rangle_{i=1}^{\infty}$ is a reverse-tribonacci sequence with $a_1=0$, $a_2=k$ and
$a_3=l$ then for all $n$
\[a_n=(\alpha\phi^{-3n/2}+\beta\cos(\gamma-\delta n))\phi^{n/2}.\]

Note that for all but an extremely small collection of $n$, the term 
$\beta\cos(\gamma+\delta n)$ dwarves $\alpha\phi^{-3n/2}$. As such, for a fixed $k$ and $l$,
the sign of
$a_n$ depends only (except for a few very rare counterexamples) on the 
fractional part of $\frac{\delta}{2\pi}n$.

\begin{lem}\label{L:yates}
For each integer $n\ge 4$, there exists a tribonacci sequence 
$\langle a_i\rangle_{i=1}^n$ terminating at $a_n=0$, with $a_1>0$, $0\ge a_2$, $0>a_3$ and with 
$a_1<0.81\phi^{n/2}.$

Similarly for all such $n$, there exists a tribonacci sequence
$\langle b_i\rangle_{i=1}^n$ terminating at $b_n=0$, with $b_2>0$, $0\ge b_1$, $0>b_3$ and with
$b_2<0.64\phi^{n/2}.$
\end{lem}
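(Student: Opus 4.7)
The plan is to translate the problem via the reverse-tribonacci correspondence. Setting $r_i := a_{n+1-i}$ transforms a tribonacci sequence terminating at $a_n = 0$ into a reverse-tribonacci sequence with $r_1 = 0$ and initial data $(k, l) := (r_2, r_3)$. The task becomes: for each $n \ge 4$, find integers $k, l$ so that $(r_{n-2}, r_{n-1}, r_n)$ has the required sign pattern, and $r_n$ (resp.\ $r_{n-1}$) satisfies the claimed size bound. By Lemma~\ref{L:benson}, $r_m = \beta\cos(\gamma - \delta m)\phi^{m/2} + \alpha\phi^{-m}$, so the sign of $r_m$ for large $m$ is the sign of $\cos(\gamma - \delta m)$.

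A short trigonometric analysis shows that the sign pattern $(r_n, r_{n-1}, r_{n-2}) = (+, \le 0, -)$ required for the first statement forces $\gamma - \delta n \pmod{2\pi}$ to lie in a specific interval of width exactly $\pi - \delta \approx 0.965$, while the pattern $(\le 0, +, -)$ required for the second gives a target interval of width $2\delta - \pi \approx 1.211$. Moreover, using $|\cos| \le 1$, any choice of $(k, l)$ with $|\beta| = 2|k\psi_1 + l\zeta_1| < 0.81$ produces $|r_n| < 0.81\phi^{n/2}$ (first statement) and also $|r_{n-1}| < 0.81\phi^{-1/2}\phi^{n/2} < 0.64\phi^{n/2}$ (second statement), up to the small error $|\alpha|\phi^{-m}$ with $|\alpha| \le |k| + |l|$ bounded via Lemma~\ref{L:victoria}.

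For each $n$, the plan is to pick $(k, l)$ from a small fixed finite set $S$ of integer pairs whose angles $\{\arg(k\psi_1 + l\zeta_1) : (k, l) \in S\}$ cover the circle with maximum consecutive gap strictly less than $\pi - \delta$, so a pigeonhole argument guarantees some $(k, l) \in S$ places $\gamma - \delta n$ in the shifted target interval. A natural choice is $S = \{(\pm 1, 0), (0, \pm 1), \pm(1, 1), \pm(-1, 1), \pm(1, -2), \pm(-1, 2)\}$: a direct numerical check verifies that these ten pairs give angles with maximum consecutive gap about $0.807 < \pi - \delta$ and each satisfies $|\beta| < 0.81$ (note: the eight pairs from $\{-1, 0, 1\}^2 \setminus \{(0,0)\}$ alone give a maximum gap of exactly $\pi - \delta$, which is the reason for including $\pm(1,-2)$ and $\pm(-1,2)$). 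The main obstacle is handling strict sign conditions near the boundary of the target interval: when $\cos(\gamma - \delta m)$ is very small, the error term $\alpha\phi^{-m}$ could in principle flip a strict sign. For $n$ above some small threshold this is automatic from the decay of $\phi^{-m}$, so only finitely many small values of $n$ require explicit verification by exhibiting a concrete sequence.
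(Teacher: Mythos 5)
Your proposal is correct and is essentially the paper's own argument: Tables~\ref{tab:yates} and~\ref{tab:jo} are precisely your covering of the circle by the angles $\gamma(k,l)$ of the same ten pairs $(k,l)$, with the case analysis written in terms of the fractional part of $\frac{\delta n}{2\pi}$ and the explicit margins $x_0,x_1,x_2$ playing the role of your ``maximum gap strictly less than the target width,'' plus the same explicit check of small $n$ by concrete sequences. The one step to make explicit when writing it up is that ``automatic from the decay of $\phi^{-m}$'' requires the uniform margin your numbers already permit: shrink each target interval by a fixed $\eta$ with $0.807<(\pi-\delta)-2\eta$, so the pigeonhole places $\gamma-\delta n$ at distance at least $\eta$ from the boundary and all three cosines are bounded away from zero independently of $n$; only then does a fixed threshold on $n$ make the error term $\alpha\phi^{-3m/2}$ harmless (the decay alone does not rule out, for large $n$, a cosine smaller than the error).
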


\begin{proof}
\begin{table}\centering
\caption{Table for Lemma~\ref{L:yates}}\label{tab:yates}
\scriptsize
\begin{tabular}{lrrrrrrrrr}\toprule
$t_0$ &$t_1$ &$k$ &$l$ &$\alpha$ &$\beta$ &$\gamma$ &$x_0$ &$x_1$ &$x_2$ \\\midrule
0    &0.06 &0  &1  &0.6184  &0.3834 &-0.0977  &0.3410 &-0.0500 &-0.1694 \\
0.06 &0.16 &-1 &2  &0.2822  &0.8027 &0.4640 &0.6879 &-0.0515 &-0.2163 \\
0.16 &0.22 &-1 &1  &-0.3362 &0.5200 &0.8677 &0.4526 &-0.0471 &-0.2482 \\
0.22 &0.35 &-1 &0  &-0.9546 &0.4364 &1.6749 &0.3778 &-0.0354 &-0.0294 \\
0.35 &0.45 &-1 &-1 &-1.5731 &0.6360 &2.3067 &0.5517 &-0.0538 &-0.1588 \\
0.45 &0.56 &0  &-1 &-0.6184 &0.3834 &3.0439 &0.3410 &-0.0500 &-0.0548 \\
0.56 &0.66 &1  &-2 &-0.2822 &0.8027 &-2.6776  &0.6879 &-0.0515 &-0.2163 \\
0.66 &0.72 &1  &-1 &0.3362  &0.5200 &-2.2739  &0.4526 &-0.0471 &-0.2482 \\
0.72 &0.85 &1  &0  &0.9546  &0.4364 &-1.4667  &0.3778 &-0.0354 &-0.0294 \\
0.85 &0.95 &1  &1  &1.5731  &0.6360 &-0.8349  &0.5517 &-0.0538 &-0.1588 \\
0.95 &1    &0  &1  &0.6184  &0.3834 &-0.0977  &0.3745 &-0.1864 &-0.0548 \\
\bottomrule
\end{tabular}
\end{table}

We will split into cases based on the fractional part of $\frac{\delta n}{2\pi}=0.3464n$. See
Table~\ref{tab:yates}. For each row, if 
$t_0\le\frac{\delta n}{2\pi}-\lfloor\frac{\delta n}{2\pi}\rfloor\le t_1$,
then for the given values of $k$ and $l$, if $\beta$ and $\gamma$ are as defined in Lemma~\ref{L:benson},
one can confirm that $\beta\cos(\gamma-\delta n)\ge x_0>0.34$, while 
$\beta\cos(\gamma-\delta (n-1))\le x_1<-0.035$ and $\beta\cos(\gamma-\delta (n-2))\le x_2<-0.029$.

Furthermore, for all such $k, l$, $|\alpha|<1.58$, so if $n\ge 7$, 
$|\alpha\phi^{-3(n-2)/2}|\le0.017$, from which it follows that 
$a_n>0>a_{n-1},a_{n-2}$. Further, 
$\frac{a_n}{\phi^{n/2}}<\beta+0.017<0.81.$

For $4\le n<7$, we can verify the sequences $(1,0,-1,0), (2,0,-1,1,0)$ and   
$(2,0,-1,1,0,0)$ satisfy the conditions for $(a_1, a_2, \ldots, a_n)$.

\begin{table}\centering
\caption{Other table for Lemma~\ref{L:yates}}\label{tab:jo}
\scriptsize
\begin{tabular}{lrrrrrrrrr}\toprule
$t_0$ &$t_1$ &$k$ &$l$ &$\alpha$ &$\beta$ &$\gamma$ &$x_0$ &$x_1$ &$x_2$ \\\midrule
0    &0.06 &1  &-1 &0.3362  &0.5200 &2.2739  &-0.3362 &0.4625 &-0.0678 \\
0.06 &0.19 &1  &0  &0.9546  &0.4364 &1.4667  &-0.1176 &0.3862 &-0.0528 \\
0.19 &0.29 &1  &1  &1.5731  &0.6360 &0.8349  &-0.2812 &0.5639 &-0.0791 \\
0.29 &0.41 &0  &1  &0.6184  &0.3834 &0.0977  &-0.1311 &0.3369 &-0.0413 \\
0.41 &0.56 &-1 &1  &-0.3362 &0.5200 &-0.8677 &-0.0714 &0.4625 &-0.0678 \\
0.56 &0.69 &-1 &0  &-0.9546 &0.4364 &-1.6749 &-0.1176 &0.3862 &-0.0528 \\
0.69 &0.79 &-1 &-1 &-1.5731 &0.6360 &-2.3067 &-0.2812 &0.5639 &-0.0791 \\
0.79 &0.91 &0  &-1 &-0.6184 &0.3834 &-3.0439 &-0.1311 &0.3369 &-0.0413 \\
0.91 &1    &1  &-1 &0.3362  &0.5200 &2.2739  &-0.0714 &0.4641 &-0.2528 \\
\bottomrule
\end{tabular}
\end{table}

For the sequence $(b_1, b_2, \ldots, b_n)$, see Table~\ref{tab:jo}. Here, for each row, if 
$t_0\le\frac{\delta n}{2\pi}-\lfloor\frac{\delta n}{2\pi}\rfloor\le t_1$,
then for the given values of $k$ and $l$, if $\beta$ and $\gamma$ are as defined in Lemma~\ref{L:benson},
one can confirm that $\beta\cos(\gamma-\delta n)\le x_0<-0.071$, while 
$\beta\cos(\gamma-\delta (n-1))\ge x_1>0.33$ and $\beta\cos(\gamma-\delta (n-2))\le x_2<-0.041$.

Furthermore, for all such $k, l$, $|\alpha|<1.58$, so if $n\ge 7$, 
$|\alpha\phi^{-3(n-2)/2}|\le0.017$, from which it follows that 
$a_{n-1}>0>a_n,a_{n-2}$.

For $4\le n<7$, we can verify the sequences $(0,1,-1,0), (0,1,-1,0,0)$ and   
$(-1,2,-1,0,1,0)$ satisfy the conditions for $(b_1, b_2, \ldots, b_n)$.
\end{proof}

This then completes our proof.

\begin{proof}[Proof of Theorem~\ref{T:ian}]
Lemma~\ref{L:yates} gives us tribonacci sequences $\langle a_i\rangle_{i=1}^n$ and 
$\langle b_i\rangle_{i=1}^n$ 
terminating at $a_n=b_n=0$. It follows that $a_1p_n+a_2q_n+a_3r_n=0=b_1q_n+b_2q_n+b_3r_n.$

Since $a_1,b_2>0>a_3,b_3$ and $0\ge a_2,b_1$, it follows that we can write 
\begin{align*}a_1p_n&=(-a_2)q_n+(-a_3)r_n\text{ and}\\
b_2q_n&=(-b_1)p_1+(-b_3)r_n\end{align*} satisfying the sign
requirements of Theorem~\ref{T:sarah}, so it follows that every integer $N\ge a_1p_n+b_2q_n+r_n$
can be written in the form $xp_n+yq_n+zr_n$ for some positive integers $x, y$ and $z$, and hence
there exists a positive tribonacci sequence of length $k$ ending at $N$.

By the bounds on $a_1$ and $b_2$ given in Lemma~\ref{L:yates}, we have such a tribonacci sequence for
all $N\ge0.81\phi^{k/2}u_k+0.64\phi^{k/2}v_k+w_k$. Since 
$u_k\le v_k\le w_k<0.11\phi^k$ and 
$0.81\phi^{k/2}+0.64\phi^{k/2}+1<1.74\phi^{k/2}$, it follows 
that such a tribonacci sequence exists for all $N\ge0.2\phi^{3k/2}$ as was required.
\end{proof}

\section{Other cubic recurrences}\label{S:silurian}
For non-negative $a, b, c$ we can ask a similar question for recurrences of the form
$x_n=ax_{n-1}+bx_{n-2}+cx_{n-3}$. Formally, let us define $k_{a,b,c}(n)$ to be the largest $k$
such there is a positive $k$-element solution $\langle x_i\rangle_{i=1}^k$ to the recurrence relation
$x_i=ax_{i-1}+bx_{i-2}+cx_{i-3}$, and define $t_{a,b,c}(n)$ to be the number of positive
$k_{a,b,c}(n)$-element solutions that exist.

If $c=0$, this is a quadratic recurrence, and the problem is already solved. If $a=0, b=0$ and 
$c=1$, the recurrence is $x_n=x_{n-3}$, and $k_{a,b,c}(n)$ is not defined for any $n$.

For all $a,b,c\ge 0$ with $c\ge 1$ and $a+b+c\ge 2$, say that
the recurrence $x_n=ax_{n-1}+bx_{n-2}+cx_{n-3}$ is 
{\em congenial} if there exists a finite bound $B$ such that for all $n$, $t_{a,b,c}(n)=\infty$
or $t_{a,b,c}(n)\le B$.

Firstly let us note that not all recurrences are congenial.

\begin{lem}
The recurrence $x_n=x_{n-1}+x_{n-2}+2x_{n-3}$ is not congenial.
\end{lem}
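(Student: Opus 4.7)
The plan is to exploit the factorization $x^3 - x^2 - x - 2 = (x-2)(x^2+x+1)$, whose non-real roots are the primitive cube roots of unity $\omega, \bar\omega$. Because $|\omega| = 1$ and $\omega^3 = 1$, every solution to the recurrence decomposes as $a_n = A \cdot 2^n + 2\operatorname{Re}(B\omega^n)$, where the oscillatory part is bounded and exactly periodic with period $3$. In particular the integer sequence $(1, -1, 0, 1, -1, 0, \ldots)$ satisfies the recurrence and terminates at $0$ at every position $k \equiv 0 \pmod{3}$, in sharp contrast to the tribonacci case of Section~\ref{S:dalek} where any zero-terminating sequence must have initial conditions of size $\phi^{k/2}$. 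This is the feature that will allow many distinct positive length-$k$ sequences to terminate at the same $n$.

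First I would establish identities for the basis sequences $(p_k, q_k, r_k)$ defined by initial conditions $(1,0,0)$, $(0,1,0)$, $(0,0,1)$. The differences $p_k - q_k$ and $r_k - p_k$ themselves satisfy the recurrence, with initial triples $(1,-1,0)$ and $(-1,0,1)$; a direct check confirms both are periodic of period $3$. Hence for every $k \equiv 0 \pmod{3}$ one has $p_k = q_k$ and $r_k = p_k + 1$, so writing $c := p_k$ gives $(p_k, q_k, r_k) = (c, c, c+1)$. A further short computation (most easily done via the closed form $q_n = (2^n - 8)/14$ at $n \equiv 0 \pmod 3$) shows that $(p_{k+1}, q_{k+1}, r_{k+1}) = (2c+2, 2c+1, 2c+1)$.

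Now I would choose $n = (2c+1)(2c+3)$. A positive length-$(k+1)$ representation would require $(2c+2) b_1 + (2c+1)(b_2 + b_3) = n$. Reducing modulo $2c+1$ forces $b_1 \equiv 0$, hence $b_1 \ge 2c+1$; substituting back forces $b_2 + b_3 = 1$, contradicting $b_2, b_3 \ge 1$. So no length-$(k+1)$ sequence terminates at $n$, and since any longer sequence terminating at $n$ would have its last $k+1$ terms form such a length-$(k+1)$ sequence, the same holds for all lengths exceeding $k$. For length $k$, the equation $c(a_1 + a_2) + (c+1) a_3 = n$ together with the bound $a_3 \le (n - 2c)/(c+1) < 4c+3$ and the congruence $a_3 \equiv 3 \pmod c$ restricts $a_3$ to $\{3,\, c+3,\, 2c+3,\, 3c+3\}$, giving respectively $4c+4$, $3c+3$, $2c+2$, and $c+1$ positive pairs $(a_1, a_2)$ summing to the required value, for a total of $10(c+1)$ representations. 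Since $c = p_k \to \infty$ as $k \to \infty$ through multiples of $3$, we obtain infinitely many $n$ for which $t_{1,1,2}(n)$ is finite but arbitrarily large, proving non-congeniality. The only nontrivial step is picking the right $n$; once this is done, everything else is routine congruence arithmetic and counting.
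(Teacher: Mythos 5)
Your proposal is correct and follows essentially the same route as the paper: the same coefficient identities $(p_k,q_k,r_k)=(c,c,c+1)$ and $(p_{k+1},q_{k+1},r_{k+1})=(2c+2,2c+1,2c+1)$ at $k\equiv 0\pmod 3$, the same target $n=(2c+1)(2c+3)$, the same modular obstruction at length $k+1$, and the same family of length-$k$ representations (with $a_3=3$), so the count grows with $c$ while remaining finite. Your additions—making explicit that failure at length $k+1$ kills all longer lengths, and the exact count $10(c+1)$—are correct refinements of what the paper leaves implicit.
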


\begin{proof}
Let $\langle p_n\rangle_{n=1}^{\infty}$, 
$\langle q_n\rangle_{n=1}^{\infty}$ and
$\langle r_n\rangle_{n=1}^{\infty}$ be the solutions to the
recurrence starting with $\langle1, 0, 0\rangle$, 
$\langle0, 1, 0\rangle$ and $\langle 0, 0, 1\rangle$
respectively. Then $x_n=x_1p_n+x_2q_n+x_3r_n$.

Solutions to the recurrence can be split as the sum of 
two parts - a sequence of the form $\langle x^{(1)}_n=2^{n-1}k\rangle$ 
and a sequence of the form $\langle x^{(2)}_n\rangle$ which is
periodic with period 3 with 
$x^{(2)}_1+x^{(2)}_2+x^{(2)}_3=0$. It is then
easy to solve for $k$: 
$x_1+x_2+x_3=x^{(1)}_1+x^{(1)}_2+x^{(1)}_3=7k$, so
$k=\frac{x_1+x_2+x_3}7$.

In particular, if you let $t_n=\frac{2^{n-1}}7$,
$p_n-t_n$ is periodic with period 
$\langle\frac67,-\frac27,-\frac47\rangle$, $q_n-t_n$ with period
$\langle-\frac17,\frac57,-\frac47\rangle$ and $r_n-t_n$ with
period $\langle-\frac17,-\frac27,\frac37\rangle$.

For $n=3t$, $x_n=c(x_1+x_2)+(c+1)x_{n-3}$ and
$x_{n+1}=2(c+1)x_1+(2c+1)(x_2+x_3)$ where 
$c=\frac{2^{3t-1}-1}7$. Then $x_{n+1}$ cannot be equal to 
$(2c+1)(2c+3)$ for positive $x_1, x_2, x_3$ ($x_1$ would have to
be a multiple of $2c+1$ that is positive but less than $2c+1$),
but for all $1\le i\le 4c+4$, if $x_1=i$, $x_2=4c+5-i$ and
$x_3=3$, then $x_n=c(4c+5)+(c+1)3=4c^2+8c+3=(2c+1)(2c+3)$.
\end{proof}

The proofs in this paper can be adapted to show that many
other recurrences are congenial. Let us say a polynomial 
$x^3-ax^2-bx-c$ is {\em affable} if $c=1$ 
and it has exactly one real root, which is more than 1. We
will show that affability leads to congeniality.

For the rest of this section, 
fix an affable polynomial $x^3-ax^2-bx-c$ with 
real root $\eta_1$ and complex roots $\eta_2$ and
$\eta_3=\overline{\eta_2}$. Note that $|\eta_2|=\eta_1^{-1/2}$.

We will make use of the following equivalent to 
Lemma~\ref{L:benson}.

\begin{lem}\label{L:benson2}
Given a sequence $\langle x_i\rangle_{i=1}^{n}$ 
satisfying $x_{i+3}=ax_{i+2}+bx_{i+1}+cx_i$ with 
$x_n=0$, $x_{n-1}=k$ and $x_{n-2}=l$, $x_i$ can be
expressed as 
\[x_i=\sum_{j=1}^3(k\psi_j+l\zeta_j)\eta_j^{n-i}\]
for constants $\psi_j, \zeta_j$ depending only on
$x^3-ax^2-bx-c$, which can be rewritten as

\[
\frac{x_i}{\eta_1^{(n-i)/2}}=\alpha\eta_1^{-3(n-i)/2}
+\beta\cos(\gamma-\delta (n-i))\]
where 
\begin{align*}
\alpha&=k\psi_1+l\zeta_1,\\
\beta e^{\gamma i}&=2(k\psi_2+l\zeta_2)\textrm{ and}\\
e^{\delta i}&=\eta_2\sqrt{\eta_1}.
\end{align*}
\end{lem}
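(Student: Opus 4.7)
The plan is to imitate the proof of Lemma~\ref{L:benson} almost verbatim, with the real root $\phi$ replaced by $\eta_1$ and the complex roots by $\eta_2, \eta_3=\overline{\eta_2}$. First I would invoke the standard characteristic-polynomial theory: any bi-infinite solution to $x_{i+3}=ax_{i+2}+bx_{i+1}+cx_i$ can be written as $x_i = p\eta_1^i + q\eta_2^i + r\eta_3^i$, because the three roots of the affable polynomial are distinct (the real one is simple since the complex ones come as a conjugate pair, and the complex pair is distinct because they have nonzero imaginary part). Since $c=1$ and Vieta gives $\eta_1\eta_2\eta_3=c\ne 0$, every $\eta_j$ is nonzero, so we may reparameterize to the equivalent form $x_i = p'\eta_1^{n-i} + q'\eta_2^{n-i} + r'\eta_3^{n-i}$, which is better suited for the boundary data at the top end of the sequence.

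Next I would impose the three boundary conditions $x_n=0$, $x_{n-1}=k$, $x_{n-2}=l$, getting the Vandermonde system
\begin{align*}
p' + q' + r' &= 0,\\
p'\eta_1 + q'\eta_2 + r'\eta_3 &= k,\\
p'\eta_1^2 + q'\eta_2^2 + r'\eta_3^2 &= l.
\end{align*}
Since the $\eta_j$ are distinct, the Vandermonde determinant is nonzero and the system has a unique solution linear in $(k,l)$. Solving by Cramer's rule (or by inverting the Vandermonde matrix explicitly) produces coefficients of the form $p' = k\psi_1 + l\zeta_1$, $q' = k\psi_2 + l\zeta_2$, $r' = k\psi_3 + l\zeta_3$, where each $\psi_j$ and $\zeta_j$ is a rational function of $\eta_1, \eta_2, \eta_3$ alone. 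This gives the first displayed expression, and the conjugation relations $\psi_3 = \overline{\psi_2}$, $\zeta_3 = \overline{\zeta_2}$ follow from $\eta_3 = \overline{\eta_2}$ and the realness of the matrix entries on the $k$, $l$ side.

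For the trigonometric form, I would use Vieta again: $\eta_1\eta_2\eta_3 = c = 1$ combined with $\eta_3 = \overline{\eta_2}$ yields $|\eta_2|^2\eta_1 = 1$, so $|\eta_2| = \eta_1^{-1/2}$. Thus $\eta_2\sqrt{\eta_1}$ lies on the unit circle and we may write $\eta_2\sqrt{\eta_1} = e^{i\delta}$, i.e.\ $\eta_2^{n-i} = e^{i\delta(n-i)}\eta_1^{-(n-i)/2}$. Pulling out a factor of $\eta_1^{(n-i)/2}$ from everything,
\begin{align*}
\frac{x_i}{\eta_1^{(n-i)/2}} = (k\psi_1+l\zeta_1)\eta_1^{-3(n-i)/2} + (k\psi_2+l\zeta_2)e^{i\delta(n-i)} + \overline{(k\psi_2+l\zeta_2)e^{i\delta(n-i)}}.
\end{align*}
Writing $2(k\psi_2+l\zeta_2) = \beta e^{i\gamma}$ collapses the conjugate pair into $\beta\cos(\gamma+\delta(n-i))$; absorbing a sign into $\gamma$ (or equivalently redefining $\delta$ by conjugation) gives the claimed $\cos(\gamma-\delta(n-i))$ form and identifies $\alpha = k\psi_1+l\zeta_1$.

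There is no real obstacle here; the only points that require any care are checking that the Vandermonde system is invertible (which needed affability to guarantee three distinct roots), verifying that the $\psi_j, \zeta_j$ truly depend only on the polynomial and not on $n$, $k$, or $l$ (immediate from the explicit Vandermonde inversion), and pinning down the sign of $\delta$ so that the cosine has argument $\gamma-\delta(n-i)$ rather than $\gamma+\delta(n-i)$. This last point is a harmless convention and matches the convention already fixed in Lemma~\ref{L:benson}.
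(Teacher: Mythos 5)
Your proposal is correct and is essentially the argument the paper intends: the paper gives no separate proof of Lemma~\ref{L:benson2}, presenting it as the ``equivalent'' of Lemma~\ref{L:benson}, whose proof is exactly this write-the-general-solution-in-characteristic-roots and solve-the-Vandermonde-system computation, using $\eta_1\eta_2\eta_3=c=1$ to get $|\eta_2|=\eta_1^{-1/2}$. Your side remark about the sign in $\cos(\gamma-\delta(n-i))$ versus $\cos(\gamma+\delta(n-i))$ is also well taken — with the positive exponent $n-i$ the stated conventions literally give the $+$ sign, and fixing it by conjugating $\gamma$ or swapping $\eta_2\leftrightarrow\eta_3$ is harmless for every later use of the lemma.
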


We will follow the steps of the proof of 
Theorem~\ref{T:barbara} for all
recurrence relations corresponding to affable polynomials. 
We will not attempt to give an actual bound. 

We note the following, which will be used in the equivalents
of both Theorems~\ref{T:susan} and~\ref{T:ian}
\begin{lem}\label{L:kamelion}
If real numbers $k$ and $l$ satisfy
$k\psi_2+l\zeta_2=0$, then $k=l=0$.
\end{lem}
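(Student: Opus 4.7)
The plan is to leverage Lemma~\ref{L:benson2} in both directions: use the representation formula to force the corresponding reverse sequence to vanish, then read off $k = l = 0$ from the initial data. The key auxiliary fact is that the constants $\psi_1, \zeta_1$ are real while $\psi_3 = \overline{\psi_2}$ and $\zeta_3 = \overline{\zeta_2}$. This follows because $\psi_1, \psi_2, \psi_3$ are the unique solution (by Vandermonde, since $\eta_1, \eta_2, \eta_3$ are distinct) of a $3 \times 3$ linear system whose right-hand side is real and whose coefficient matrix is permuted to its conjugate by swapping $\eta_2 \leftrightarrow \eta_3$; hence conjugating the solution permutes it the same way. The same argument applies to the $\zeta_j$.

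Granting this, suppose real numbers $k, l$ satisfy $k\psi_2 + l\zeta_2 = 0$. Taking complex conjugates and using the identifications above gives $k\psi_3 + l\zeta_3 = 0$ as well. Now consider the sequence $\langle x_i\rangle_{i=1}^n$ obtained by taking $x_n = 0$, $x_{n-1} = k$, $x_{n-2} = l$ and extending backwards via $c x_i = x_{i+3} - a x_{i+2} - b x_{i+1}$ (valid since $c = 1 \neq 0$). By Lemma~\ref{L:benson2},
\[
x_i = \sum_{j=1}^3 (k\psi_j + l\zeta_j)\,\eta_j^{n-i}.
\]
Setting $i = n$ and using $x_n = 0$ yields $(k\psi_1 + l\zeta_1) + (k\psi_2 + l\zeta_2) + (k\psi_3 + l\zeta_3) = 0$, and since the last two summands already vanish, $k\psi_1 + l\zeta_1 = 0$ as well. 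Consequently $x_i = 0$ for every $i$. But by construction $x_{n-1} = k$ and $x_{n-2} = l$, so $k = l = 0$.

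There is no real obstacle here: the whole content sits in the observation that killing the $\eta_2$-coefficient of a real sequence automatically kills the $\eta_3$-coefficient by conjugation, and the third coefficient is then pinned down by the single equation $x_n = 0$. The only point that requires a brief justification is the conjugation symmetry of the $\psi_j, \zeta_j$, which is handled by the uniqueness argument above and which the authors will presumably take for granted from the analogue in Lemma~\ref{L:benson}.
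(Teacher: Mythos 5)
Your proof is correct and follows essentially the same route as the paper: conjugation kills the $\eta_3$-coefficient, the condition $x_n=0$ then kills the $\eta_1$-coefficient, so the whole sequence vanishes and $k=x_{n-1}=0$, $l=x_{n-2}=0$. The only difference is cosmetic — you justify the conjugation symmetry $\psi_3=\overline{\psi_2}$, $\zeta_3=\overline{\zeta_2}$ explicitly, which the paper simply asserts.
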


\begin{proof}
As $\psi_3=\overline{\psi_2}$ and $\zeta_3=\overline{\zeta_2}$,
$k\psi_3+l\zeta_3=0$ and therefore 
the sequence with $x_n=0$, $x_{n-1}=k$ and $x_{n-2}=l$
can simply be expressed as $x_i=(k\psi_1+l\zeta_1)\eta_1^{n-i}$.

As $0=x_n=k\psi_1+l\zeta_1$, it follows that $x_i=0$ for all
$i$ and therefore $k=l=0$.
\end{proof}

We start by following the proof of Theorem~\ref{T:susan}.

\begin{lem}\label{L:polly2}
There exists an absolute bound $M$ such that for
$n\ge 4$ and
all non-zero integer 
sequences $\langle x_i\rangle_{i=1}^{n}$ 
satisfying $x_{i+3}=ax_{i+2}+bx_{i+1}+cx_i$ and $x_n=0$
either $|x_1|\ge M\eta_1^{n/2}$ or $|x_2|\ge M\eta_1^{n/2}$
(or both).
\end{lem}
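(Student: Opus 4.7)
The plan is to adapt the proof of Corollary~\ref{C:polly} to the affable setting, using the general representation from Lemma~\ref{L:benson2} in place of the specific tribonacci constants. Setting $k = x_{n-1}$ and $l = x_{n-2}$, that lemma writes
\[
x_i = \alpha\,\eta_1^{-(n-i)} + \beta\,\eta_1^{(n-i)/2}\cos\bigl(\gamma-\delta(n-i)\bigr),
\]
with $\alpha = k\psi_1+l\zeta_1$ and $\beta = 2|k\psi_2+l\zeta_2|$. The oscillatory piece dominates for $n-i$ large, so I would apply Lemma~\ref{L:zoe} to the consecutive angles $\gamma-\delta(n-1)$ and $\gamma-\delta(n-2)$, selecting some $i^*\in\{1,2\}$ with $|\cos(\gamma-\delta(n-i^*))|\ge\cos(\delta/2)$, and then read off a lower bound on $|x_{i^*}|$.

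Three ingredients would make this go. First, to apply Lemma~\ref{L:zoe} I need $\delta\in(\pi/2,\pi)$: from $\eta_1\eta_2\eta_3=c=1$ and $\eta_3=\overline{\eta_2}$ we get $|\eta_2|^2=1/\eta_1$, while the second symmetric function gives $2\eta_1\,\Re(\eta_2)+1/\eta_1=-b\le 0$, forcing $\Re(\eta_2)<0$; since $\sqrt{\eta_1}$ is positive real, $\delta=\arg(\eta_2\sqrt{\eta_1})$ lies strictly in $(\pi/2,\pi)$. Second, $|\alpha|\le(|\psi_1|+|\zeta_1|)\sqrt{k^2+l^2}$ is trivial, and Lemma~\ref{L:kamelion} combined with compactness of the unit circle $\{k^2+l^2=1\}$ and homogeneity produces a constant $c_1>0$ with $\beta\ge 2c_1\sqrt{k^2+l^2}$. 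Third, because $c=1$ the recurrence is invertible backwards, so $(k,l)=(0,0)$ together with $x_n=0$ forces the whole sequence to vanish, and hence for any non-zero integer solution $\sqrt{k^2+l^2}\ge 1$. Combining these ingredients,
\[
|x_{i^*}| \ge \sqrt{k^2+l^2}\,\eta_1^{(n-i^*)/2}\bigl(2c_1\cos(\delta/2)-c_2\eta_1^{-3(n-i^*)/2}\bigr),
\]
which for all $n$ above a threshold $n_0$ exceeds $M\eta_1^{n/2}$ with $M=c_1\cos(\delta/2)/\eta_1$.

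The main obstacle is pushing the bound down to $n=4$. For the finitely many $n<n_0$ the $\alpha$-correction is no longer negligible; Pebody sidesteps this in Corollary~\ref{C:polly} by observing that an integer sequence has $|x_1|\ge 1$ or $|x_2|\ge 1$, which beats the threshold once $M$ is chosen sufficiently small. The same trick works here \emph{provided} one can rule out the degenerate case $x_1=x_2=0$ with the sequence non-zero. Such a sequence must be a multiple of the basic solution $r$ starting $(0,0,1)$ and satisfies $x_n=0$ iff $r_n=0$; since $r_n=p\eta_1^n+q\eta_2^n+\overline{q}\eta_3^n$ with $p\ne 0$ by a Vandermonde computation, $r_n\ne 0$ for all sufficiently large $n$, so only a handful of exceptional indices remain, and these must be handled by direct inspection of the polynomial at hand.
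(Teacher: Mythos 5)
Your proposal follows the paper's proof essentially step for step: lower-bound $\beta$ via Lemma~\ref{L:kamelion} together with compactness/homogeneity, upper-bound $|\alpha|$ by a fixed multiple of the size of $(k,l)$, apply Lemma~\ref{L:zoe} to the two consecutive angles to pick $i^*\in\{1,2\}$, let the oscillatory term dominate for $n$ large, and dispose of the finitely many small $n$ by integrality plus exclusion of the degenerate case $x_1=x_2=0$. Your added verification that $\delta\in(\pi/2,\pi)$ is a welcome bonus (the paper asserts this without proof), and your treatment of the $x_1=x_2=0$ case is at least as careful as the paper's own parenthetical remark, so the proposal is correct in the same sense and to the same standard as the published argument.
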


\begin{proof}
The set of complex numbers $k\psi_2+l\zeta_2$ for $k, l$
real with $|k|+|l|=1$ is a closed subset of the complex
plane (in fact a hollow parallelogram) which, 
by Lemma~\ref{L:kamelion} does not contain 0. 
As such, there exists a constant $V>0$ such that
for all such $k, l$, $|k\psi_2+l\zeta_2|>V$. Then for all 
real $k, l$ it follows that 
$\beta=|k\psi_2+l\zeta_2|>V(|k|+|l|)$.

Clearly if $U=\max(|\psi_1|,|\zeta_1|)$, $\alpha\le U(|k|+|l|)$.

Pick integer $N$ such that 
$V\cos(\delta/2)-U\eta_1^{-3(N-1)/2}$ is positive.
Note we can do this because $\frac{\pi}2<\delta<\pi$.
Then let $M>0$ be such that 
$V\cos(\delta/2)-U\eta_1^{-3(N-2)/2}>M\eta_1$ and 
$\eta_1^{-N/2}>M$.

Now if $n\le N$, then $M\eta_1^{n/2}<1$ (note that 
$\eta_1>1$ since $1^3<a\times1^2+b\times1+c$) and
$x_1, x_2$ cannot both be 0 (as then $x_n$ would have to
be the same sign as $x_3$ and non-zero).

For $n>N$, we know from Lemma~\ref{L:zoe} that there
exists $t\in\{1,2\}$ such that
$|\cos(\gamma-\delta(n-t))|>\cos(\delta/2)>0$.

For such $t$, $n-t\ge N-1$ and so it follows that
\begin{align*}
\frac{|x_t|}{\eta_1^{(n-t)/2}}
&=|\alpha\eta_1^{-3n/2}+\beta\cos(\gamma-\delta n)|\\
&\ge|\beta\cos(\gamma-\delta n)|-|\alpha|\eta_1^{-3n/2}\\
&\ge V\cos(\delta/2)-U\eta_1^{-3n/2}
&\ge M\eta_1
\end{align*}

and hence $|x_t|\ge M\eta_1^{(n+2-t)/2}\ge M\eta_1^{n/2}$.
\end{proof}

This is enough for the equivalent of Theorem~\ref{T:susan}

\begin{thm}\label{T:susan2}
There exists
a fixed bound $T$ such that for any positive integers $n, k$
with $k\ge 4$, the number of positive sequences
$\langle x_i\rangle_{i=1}^{k}$ satisfying 
$x_{i+3}=ax_{i+2}+bx_{i+1}+cx_i$ and terminating at 
$x_k=n$ is at most 
\[\lceil T\frac{n}{\eta_1^{3k/2}}\rceil^2.\]
\end{thm}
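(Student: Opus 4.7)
The plan is to imitate the proof of Theorem~\ref{T:susan}, with Lemma~\ref{L:polly2} playing the role of Corollary~\ref{C:polly}, $\eta_1$ replacing $\phi$, and the bound $M$ from Lemma~\ref{L:polly2} replacing the constant $0.01$.

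First I would partition the positive sequences $\langle x_i\rangle_{i=1}^k$ with $x_k=n$ by the pair $(\lfloor x_1/(M\eta_1^{k/2})\rfloor,\lfloor x_2/(M\eta_1^{k/2})\rfloor)$. If two such sequences $\langle x_i\rangle$ and $\langle y_i\rangle$ shared the same pair, then $\langle x_i-y_i\rangle_{i=1}^k$ would be a non-zero integer solution of the recurrence, terminating at $0$, with $|x_1-y_1|<M\eta_1^{k/2}$ and $|x_2-y_2|<M\eta_1^{k/2}$, contradicting Lemma~\ref{L:polly2}. Hence distinct sequences have distinct pairs.

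Next I need a bound on how large $x_1$ and $x_2$ can be. Let $\langle p_i\rangle,\langle q_i\rangle,\langle r_i\rangle$ be the basis sequences with initial triples $(1,0,0)$, $(0,1,0)$, $(0,0,1)$, so that $x_k=x_1p_k+x_2q_k+x_3r_k$; positivity of all $x_i$ then gives $x_1,x_2\le n/\min(p_k,q_k,r_k)$. I will establish a uniform lower bound $\min(p_k,q_k,r_k)\ge C_0\eta_1^k$ for some constant $C_0>0$ depending only on $a,b,c$. Writing each basis sequence as $s_n=A_s\eta_1^n+B_s\eta_2^n+\overline{B_s}\eta_3^n$, and using that $c=1$ combined with affability gives $|\eta_2|=|\eta_3|=\eta_1^{-1/2}<1$, the complex-root contribution has magnitude at most $2|B_s|\eta_1^{-n/2}\to 0$. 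Since the recurrence has non-negative coefficients with $c=1$, we have $s_{n+3}\ge s_n$, and the initial conditions give $p_1=q_2=r_3=1$, so each basis sequence is bounded below by $1$ along its own residue class modulo~$3$. This forces $A_s>0$ for every $s\in\{p,q,r\}$: if $A_s\le0$, the residual $s_n-A_s\eta_1^n$ would tend to $0$, pushing $s_n$ eventually below $1$. Hence $s_k\sim A_s\eta_1^k$ with $A_s>0$, and after shrinking $C_0$ to handle a finite range of small $k$ I obtain the required uniform bound, valid on those $k$ where $p_k,q_k,r_k$ are all positive (which is the only setting in which the theorem's finite bound is meaningful, since otherwise infinitely many positive sequences exist).

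Putting things together, each coordinate of the assigned pair lies in $\{0,1,\dots,\lceil n/(C_0M\eta_1^{3k/2})\rceil\}$, so taking $T$ to be a suitable constant (for instance $T=2/(C_0M)$) yields the bound $\lceil Tn/\eta_1^{3k/2}\rceil^2$. The main obstacle will be the lower bound $\min(p_k,q_k,r_k)\ge C_0\eta_1^k$: the delicate step is proving $A_p,A_q,A_r>0$, which is where affability is essential, via $|\eta_2|<1$ and the non-decay of the basis sequences along residue classes modulo~$3$.
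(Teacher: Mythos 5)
Your proposal follows essentially the same route as the paper's proof: Lemma~\ref{L:polly2} supplies the $M\eta_1^{k/2}$ separation between first (or second) terms of distinct sequences terminating at $n$, and a lower bound of order $\eta_1^k$ on the basis values $p_k,q_k,r_k$ bounds $x_1,x_2$ by a constant times $n/\eta_1^{k}$ --- this is exactly the constant $P$ with $P\eta_1^k(a_1+a_2+a_3)\le a_k$ that the paper simply asserts --- giving the count $\lceil Tn/\eta_1^{3k/2}\rceil^2$. Your explicit derivation of $A_s>0$ (via $|\eta_2|=\eta_1^{-1/2}$ and monotonicity along residue classes mod 3), and your caveat about those small $k\ge 4$ where some basis value vanishes (e.g.\ $a=0$ gives $r_4=0$, where $x_3$ is unconstrained and the stated finite bound genuinely fails), make your write-up more careful than the paper's unproved one-line assertion of $P$, which breaks down in precisely those cases.
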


\begin{proof}
There is a fixed $P$ such that for any positive sequence 
$\langle a_i\rangle_{i=1}^{k}$ satisfying the recurrence relation
with $k\ge 4$, $P\eta_1^k(a_1+a_2+a_3)\le a_k$.

Thus for any such sequence terminating at $n$, 
$a_1$ and $a_2$ are bounded above by $\frac{n}{P\eta_1^k}$
and for any two such sequences, by Lemma~\ref{L:polly2},
either the first terms or the second terms differ by at least $M\eta_1^{k/2}$.

Thus the number of such sequences is at most
$\lceil\frac{n}{PM\eta_1^{3k}/2}\rceil^2$.
\end{proof}

Now we proceed to follow the proof of Theorem~\ref{T:ian}.
We will need the following Corollary to Lemma~\ref{L:kamelion}

\begin{cor}\label{C:peri}
Given any interval $0\le x<y\le 2\pi$ within $(0,2\pi)$, 
we can pick non-zero integers $k, l$ for which $x<\gamma<y$.
\end{cor}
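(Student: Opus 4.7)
The plan is to recognise that by definition $\gamma = \arg(2(k\psi_2 + l\zeta_2)) = \arg(k\psi_2 + l\zeta_2)$, so the task reduces to finding non-zero integers $k, l$ such that $k\psi_2 + l\zeta_2$ has argument in $(x, y)$. I would derive this directly from Lemma~\ref{L:kamelion} together with an elementary density argument.

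First I would observe that Lemma~\ref{L:kamelion} says exactly that the $\R$-linear map $\Phi : \R^2 \to \C$ defined by $\Phi(k, l) = k\psi_2 + l\zeta_2$ has trivial kernel. Since $\C$ is two-dimensional over $\R$, this makes $\Phi$ an $\R$-linear isomorphism, and in particular a homeomorphism $\R^2 \to \C$.

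The set $S = \{z \in \C \setminus \{0\} : \arg z \in (x, y)\}$ is a non-empty open sector invariant under multiplication by positive reals, so $\Phi^{-1}(S)$ is a non-empty open subset of $\R^2 \setminus \{0\}$, invariant under scaling by positive reals; that is, a two-dimensional open cone in $\R^2$. Such a cone cannot be contained in the union of the two coordinate axes, so there is some $(k_0, l_0) \in \Phi^{-1}(S)$ with $k_0 l_0 \neq 0$. Since $\Phi^{-1}(S)$ is open, a neighbourhood of $(k_0, l_0)$ lies inside it, and by density of the rationals we can pick $(k', l') \in \Phi^{-1}(S) \cap \Q^2$ with $k' l' \neq 0$. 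Clearing denominators yields non-zero integers $(k, l)$ which, being a positive scalar multiple of $(k', l')$, still lie in $\Phi^{-1}(S)$, so $\arg(k\psi_2 + l\zeta_2) \in (x, y)$ as required.

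There is no real obstacle here; the one subtle point is arranging for both coordinates to be non-zero, which is forced by the fact that the preimage of a two-dimensional open set under an $\R$-linear isomorphism is itself two-dimensional and hence cannot be confined to the union of the two coordinate axes.
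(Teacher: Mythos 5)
Your proof is correct and follows essentially the same route as the paper: both read Lemma~\ref{L:kamelion} as saying the $\R$-linear map $(k,l)\mapsto k\psi_2+l\zeta_2$ is an isomorphism onto $\C$, and then pass from a real preimage of the sector to an integer one. The only difference is in the approximation step --- you use openness of the preimage cone, a rational point off the axes, and clearing denominators, whereas the paper hits a target $e^{iz}$ with real $k,l$ and takes $\lfloor nk\rfloor,\lfloor nl\rfloor$ for large $n$; your variant in fact handles the requirement that both integers be non-zero a little more explicitly.
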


\begin{proof}
Lemma~\ref{L:kamelion} says that the set 
$\{k\psi_2+l\zeta_2:k,l\in\mathbb{R}\}$, 
when viewed geometrically as a subset of the complex plane,
is not of dimension 1. Thus it must be the entire complex
plane. Pick $x<z<y$, then there exist real $k, l$ with
$k\psi_2+l\zeta_2=e^{iz}$.

Now let $k_n=\lfloor nk\rfloor$ and $l_n=\lfloor nl\rfloor$. 
The limit as $n$ tends to infinity of 
$\frac{k_n\psi_2+l_n\zeta_2}n$ is $e^{iz}$ and therefore
for all sufficiently large $n$, $\gamma$ (which is
the argument of $\frac{k_n\psi_2+l_n\zeta_2}n$) must be
contained in the open interval $(x,y)$.
\end{proof}
 
We shelve this for the moment and focus on a simple piece 
of trigonometry.

\begin{lem}\label{L:yates2}
For all numbers $\frac{\pi}{2}<\delta<\pi$, there exists 
$t$ such that $\cos(t)>0>\cos(t+\delta),\cos(t+2\delta)$
\end{lem}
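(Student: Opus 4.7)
The plan is to exhibit an explicit witness. Geometrically, since each step $\delta$ is strictly between a quarter-turn and a half-turn, I want $t+\delta$ to land just past $\pi/2$ (making $\cos(t+\delta) < 0$), which places $t$ in the open first quadrant (so $\cos t > 0$) and pushes $t + 2\delta$ past $\pi$ but still strictly short of $3\pi/2$ (so $\cos(t+2\delta) < 0$). Concretely, I would take $t := \pi/2 - \delta + \mu$ for any $\mu \in (0, \pi - \delta)$; this interval is non-empty because $\delta < \pi$.

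Verification is routine via the identity $\cos(\pi/2 + x) = -\sin x$, which reduces the three cosines to $\sin(\delta - \mu)$, $-\sin\mu$, and $-\sin(\delta + \mu)$ respectively. A quick range check shows all three arguments lie in $(0, \pi)$: one has $\delta - \mu \in (2\delta - \pi, \delta) \subset (0, \pi)$ because $\delta > \pi/2$, $\mu \in (0, \pi - \delta) \subset (0, \pi)$, and $\delta + \mu \in (\delta, \pi)$ because $\mu < \pi - \delta$. Hence all three sines are strictly positive, which gives the required strict inequalities on the cosines.

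There is no serious obstacle here: the whole argument is a one-line choice of $t$ plus trigonometry. The only pitfall is to slip on the range of $\mu$---one does need $\mu$ strictly positive (else $\cos(t+\delta) = 0$) and strictly less than $\pi - \delta$ (else $\sin(\delta + \mu) \le 0$)---but any $\mu$ in that open interval works, and an entirely non-constructive alternative (noting that each forbidden condition excises an open arc of length exactly $\pi$ from the circle of length $2\pi$, so the three cannot jointly cover it) could be given if one preferred to avoid picking $t$ explicitly.
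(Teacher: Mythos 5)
Your proof is correct and is essentially the paper's own argument: your witnesses $t=\pi/2-\delta+\mu$ with $\mu\in(0,\pi-\delta)$ are exactly the points of the interval $(\tfrac{\pi}{2}-\delta,\tfrac{3\pi}{2}-2\delta)$ that the paper chooses from, and your sine-reduction is just a repackaging of the paper's direct check that $t\in(-\tfrac{\pi}{2},\tfrac{\pi}{2})$ while $t+\delta,t+2\delta\in(\tfrac{\pi}{2},\tfrac{3\pi}{2})$. (Only the offhand remark that $t$ lies in the first quadrant is slightly off, since $t$ may be negative, but your actual verification does not use it.)
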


\begin{proof}
Pick $t$ such that 
$\frac{\pi}{2}-\delta<t<\frac{3\pi}2-2\delta$. There exists
such a $t$ because $\delta<\pi$.

Since $\delta<\pi$, $-\frac{pi}{2}<\frac{\pi}{2}-\delta<t$.
Similarly since $\frac{\pi}2<\delta$, 
$t<\frac{3\pi}2-2\delta<\frac{pi}{2}$. So $-\frac{pi}{2}<t<\frac{pi}{2}$ and hence $\cos(t)>0$.

Further $\frac{\pi}2<t+\delta<t+2\delta<\frac{3\pi}2$, so
$\cos(t+\delta)$ and $\cos(t+2\delta)$ are negative.
\end{proof}

This leads to the following somewhat technical-seeming lemma.

\begin{lem}\label{L:yates3}
For all numbers $\frac{\pi}{2}<\delta<\pi$, there exists 
an $\epsilon>0$ and finitely
many intervals $\langle(x_i,y_i)\rangle_{i=1}^n$ such that for
all $t$ there exists an interval $(x_i,y_i)$ such that for
all $x\in(x_i,y_i)$, $\cos(t+x)>\epsilon$ and 
$-\epsilon>\cos(t+x+\delta),\cos(t+x+2\delta)$.
\end{lem}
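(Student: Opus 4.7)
The plan is to derive Lemma~\ref{L:yates3} from Lemma~\ref{L:yates2} in three moves: upgrade the single point guaranteed by Lemma~\ref{L:yates2} to an open interval by continuity, introduce an $\epsilon$ margin by compactness, and then tile the circle $\R/2\pi\Z$ by finitely many translates of an appropriately chosen subinterval.

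First I would invoke Lemma~\ref{L:yates2} to obtain some $s_0$ with $\cos(s_0)>0$ and $\cos(s_0+\delta),\cos(s_0+2\delta)<0$. Because these three strict inequalities are preserved under sufficiently small perturbations of $s_0$, the open set
\[
U=\{\,s\in\R:\cos(s)>0,\;\cos(s+\delta)<0,\;\cos(s+2\delta)<0\,\}
\]
is non-empty, so it contains an open interval inside of which I can pick a closed subinterval $[a',b']$ of positive length $\ell=b'-a'$. On this compact set the three continuous functions $\cos(s)$, $-\cos(s+\delta)$, $-\cos(s+2\delta)$ are each strictly positive, and hence, by compactness, admit a common positive lower bound; call half of it $\epsilon$, so on $[a',b']$ we have $\cos(s)\ge 2\epsilon$ and $\cos(s+\delta),\cos(s+2\delta)\le -2\epsilon$.

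Next I would tile. Let $N=\lceil 4\pi/\ell\rceil$ and for $i=0,1,\ldots,N-1$ define
\[
(x_i,y_i)=\bigl(a'-i\ell/2,\;a'-i\ell/2+\ell/2\bigr),
\]
each of length $\ell/2$. Given any $t\in\R$, replacing $t$ by $t\bmod 2\pi$ I can find $i$ with $i\ell/2\le t\bmod 2\pi<(i+1)\ell/2$ because $N\cdot\ell/2\ge 2\pi$. For this $i$, $t+x_i\pmod{2\pi}$ lies in $[a',a'+\ell/2]$, so $(t+x_i,t+y_i)\subset(a',b')\pmod{2\pi}$, and hence every $x\in(x_i,y_i)$ satisfies $t+x\in(a',b')\pmod{2\pi}$, where the three cosine inequalities hold with margin $\epsilon$ (even $2\epsilon$, giving slack).

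The only real obstacle, if it even deserves the name, is bookkeeping the mod-$2\pi$ covering: one must verify that the finitely many shifted subintervals cover every residue class of $t$, which is exactly why I pick $N$ so that $N\cdot\ell/2\ge 2\pi$. Everything else is continuity and compactness applied to the concrete trigonometric setup already handed to us by Lemma~\ref{L:yates2}.
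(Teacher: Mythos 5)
Your proposal is correct and follows essentially the same route as the paper: take the point from Lemma~\ref{L:yates2}, enlarge it to an interval on which the three strict inequalities hold with a uniform margin $\epsilon$ (the paper fixes $\epsilon$ first and uses continuity, you use compactness of a closed subinterval — an immaterial difference), and then cover all residues of $t$ modulo $2\pi$ by finitely many intervals of length less than half that window so that one translate always lands inside it. The only differences are bookkeeping choices (your intervals are anchored at $a'$ and have length $\ell/2$, the paper uses the uniform partition into arcs of length $2\pi/n<\tfrac{u-l}{2}$), and your mod-$2\pi$ covering argument checks out.
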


\begin{proof}
Pick a $t$ according to Lemma~\ref{L:yates2}, and let
$\epsilon>0$ be a real number such that $\cos(t)>\epsilon$
and $-\epsilon>\cos(t+\delta),\cos(t+2\delta)$.

Then since $\cos$ is a continuous function, there is an
open region $(l,u)$ around $t$ such that for all
$x\in(l,u)$, $\cos(x)>\epsilon$ and 
$-\epsilon>\cos(x+\delta),\cos(x+2\delta)$.

Let $n$ be an integer such that $\frac{4\pi}{n}<u-l$ and then
define $(x_i,y_i)$ to be $(i\frac{2\pi}n,(i+1)\frac{2\pi}n)$
for $1\le i\le n$.

For all $t$ there is a maximum integer $K$ such that 
$t+K\frac{2\pi}{n}\le l$. Then $l<t+(K+1)\frac{2\pi}n$ by 
maximality, but $t+(K+2)\frac{2\pi}{n}\le l+\frac{4\pi}{n}<u$.

Thus if $(K+1)\frac{2\pi}{n}<x<(K+2)\frac{2\pi}{n}$, $l<t+x<u$
and hence $\cos(t+x)>\epsilon$ and $-\epsilon>\cos(t+x+\delta),
\cos(t+x+2\delta)$.

Since $\cos$ is periodic with period $2\pi$, if $1\le i\le n$
and $i$ is equivalent to $K+1$ modulo $n$, then for all 
$x_i<x<y_i$, $\cos(t+x)>\epsilon$ and $-\epsilon>\cos(t+x+\delta),
\cos(t+x+2\delta)$.
\end{proof}

This leads to the equivalent of Lemma~\ref{L:yates}.
\begin{lem}\label{L:yates4}
There exists a constant $C$ such
that for all $n\ge 4$, there exist sequence 
$\langle a_i\rangle_{i=1}^n$ 
satisfying the recurrence relation and terminating at 0
for which $C\eta_1^{n/2}>a_1>0>a_2,a_3$
\end{lem}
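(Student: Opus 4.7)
The plan is to follow the strategy of Lemma~\ref{L:yates}, but to replace the hand-built case table by combining Lemma~\ref{L:yates3} with Corollary~\ref{C:peri}. Think of a candidate sequence as parametrised by $(k,l)=(a_{n-1},a_{n-2})$, with the signs of $a_1,a_2,a_3$ read off from Lemma~\ref{L:benson2}. The main task is to exhibit, for each residue of $\delta(n-1)$ modulo $2\pi$, a pair of non-zero integers $(k,l)$ forcing $\cos(\gamma-\delta(n-1))$ strictly positive and $\cos(\gamma-\delta(n-2))$, $\cos(\gamma-\delta(n-3))$ strictly negative, with a bound on $|k|+|l|$ independent of $n$.

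First I would apply Lemma~\ref{L:yates3} with the ambient $\delta\in(\pi/2,\pi)$ coming from the affable polynomial, producing finitely many intervals $(x_1,y_1),\ldots,(x_m,y_m)\subset(0,2\pi)$ and a constant $\epsilon>0$ such that for any shift $t$ some interval $(x_i,y_i)$ is entirely contained in the set where $\cos(t+x)>\epsilon$ and $\cos(t+x+\delta),\cos(t+x+2\delta)<-\epsilon$. For each such interval I would then use Corollary~\ref{C:peri} to pick non-zero integers $(k_i,l_i)$ whose associated argument $\gamma_i=\arg(2(k_i\psi_2+l_i\zeta_2))$ lies in $(x_i,y_i)$. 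Because only finitely many such pairs appear, the constants $\alpha_i=k_i\psi_1+l_i\zeta_1$ and $\beta_i=|2(k_i\psi_2+l_i\zeta_2)|$ are uniformly bounded above, and by the non-degeneracy fact that underlies Lemma~\ref{L:polly2} the $\beta_i$ are uniformly bounded away from zero.

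Given $n\ge 4$, I would set $t=-\delta(n-1)$, take the index $i=i(n)$ granted by Lemma~\ref{L:yates3}, and use the sequence with $a_n=0$, $a_{n-1}=k_i$, $a_{n-2}=l_i$. Lemma~\ref{L:benson2} then gives
\[\frac{a_j}{\eta_1^{(n-j)/2}}=\alpha_i\eta_1^{-3(n-j)/2}+\beta_i\cos(\gamma_i-\delta(n-j)),\]
and by construction the cosine term exceeds $\epsilon$ for $j=1$ and is less than $-\epsilon$ for $j=2,3$. Since $\eta_1>1$, the prefactor $\eta_1^{-3(n-j)/2}$ on the $\alpha_i$ summand decays uniformly in $i$, so once $n$ exceeds some absolute threshold $N_0$ it is dwarfed by $\epsilon\beta_i/2$ in all three relevant indices. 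This forces the sign pattern $a_1>0>a_2,a_3$ and gives $a_1\le(|\alpha_i|+\beta_i)\eta_1^{(n-1)/2}\le C\eta_1^{n/2}$ for an absolute $C$.

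The main obstacle I anticipate is the finite collection of small $n<N_0$, where the asymptotic argument has no bite. The case $n=4$ is immediate from the explicit witness $(a+b,-1,-1,0)$, which is valid because $c=1$ and $a+b>0$; for $4<n<N_0$ I would try to solve the linear system $a_n=0$ with $a_1>0>a_2,a_3$ by hand, enlarging $C$ to absorb whatever finite list of witnesses emerges. Verifying that such witnesses exist uniformly across every affable recurrence is the genuinely delicate part of the argument, and is the only place where the full force of the affable hypothesis is used rather than just the ingredients already packaged into Lemmas~\ref{L:benson2}, \ref{L:yates3} and Corollary~\ref{C:peri}.
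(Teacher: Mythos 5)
Your proposal follows essentially the same route as the paper: apply Lemma~\ref{L:yates3} to get $\epsilon$ and finitely many intervals, use Corollary~\ref{C:peri} to pick non-zero integer pairs $(k_i,l_i)$ with $\gamma$ in each interval, bound $|\alpha_i|$ above and $\beta_i$ below over this finite collection, and let the $\eta_1^{-3(n-j)/2}$ decay force the sign pattern once $n$ exceeds a threshold, absorbing small $n$ by explicit witnesses and enlarging $C$. The only residual worry you raise, the finitely many small $n$, is handled in the paper by the direct recipe $a_1=q+r$, $a_2=a_3=-p$ where $a_n=pa_1+qa_2+ra_3$, so no delicate case analysis is needed there.
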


\begin{proof}
Since $\frac{\pi}{2}<\delta<\pi$, we can apply Lemma~\ref{L:yates3} and get 
$\epsilon>0$ and finitely many intervals $(x_i,y_i)$ such that for all
$t$ there exists an interval $(x_i,y_i)$ such that for
all $x\in(x_i,y_i)$, $\cos(t+x)>\epsilon$ and 
$-\epsilon>\cos(t+x+\delta),\cos(t+x+2\delta)$.

By Corollary~\ref{C:peri}, for each such interval
$(x_i,y_i)$, we can choose non-zero integers 
$k_i, l_i$ for which $x_i<\gamma(k_i, l_i)<y_i$.
Let $A$ be some real number such that
$|\alpha(k_i,l_i)|<A$ for all such pairs,
$B>0$ be some real number such that
$|\beta(k_i,l_i)|>B$ and let 
$N$ be such that $A\eta_1^{-3N/2}<B\epsilon$.

Then for any $j\ge N+3$, by the statement of Lemma~\ref{L:yates3}, 
there exists an interval $(x_i,y_i)$ such that for all 
$x\in(x_i,y_i)$, $\cos(x-(j-1)\delta)>\epsilon$ and 
$-\epsilon>\cos(x-(j-2)\delta), \cos(x-(j-3)\delta)$. Since $\gamma(k_i,l_i)\in(x_i,y_i)$, it follows that
\[
\frac{a_1}{\eta_1^{(j-1)/2}}
=\alpha(k_i,l_i)\eta_1^{-3(j-1)/2}+\beta(k_i,l_i)\cos(\gamma(k_i,l_i)-(j-1)\delta)\]
is the sum of a number of absolute value at most 
$A\eta_1^{-3N/2}$ and a number that is at least $B\epsilon$ and
so is positive. Similarly $a_2$ and $a_3$ are negative.
$\frac{|a_1|}{\eta_1^{j/2}}$ is bounded above by 
$2B\epsilon$.

For each value $4\le j\le N+2$, we can just choose any sequence satisfying the bounds. For instance, if $a_j=pa_1+qa_2+ra_3$, we set $a_1=q+r$, $a_2=a_3=-p$. Choose $C$ such that $C>2B\epsilon$ and such that for all $4\le j\le N+2$, the sequences we have
chosen satisfy $a_1<C\eta_1^{j/2}$.
\end{proof}

Similarly we can get the following.

\begin{lem}\label{L:yates5}
There exists a constant $C$ such
that for all $n\ge 4$, there exist sequence 
$\langle b_i\rangle_{i=1}^n$
satisfying the recurrence relation and terminating at 0
for which $C\eta_1^{n/2}>b_2>0>b_1,b_3$ 
\end{lem}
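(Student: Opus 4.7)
The plan is to mirror the proof of Lemma~\ref{L:yates4}, but with the desired sign pattern at positions $(1,2,3)$ being $(-,+,-)$ instead of $(+,-,-)$. By Lemma~\ref{L:benson2}, once $n$ is large enough that the $\alpha$-correction $\alpha\eta_1^{-3(n-i)/2}$ is dominated by $\beta\cos(\gamma-\delta(n-i))$, the sign of $b_i$ is controlled by the sign of that cosine factor, so we need $(k,l)$ producing $\cos(\gamma-\delta(n-2)) > \epsilon$ and $\cos(\gamma-\delta(n-1)), \cos(\gamma-\delta(n-3)) < -\epsilon$ for some uniform $\epsilon > 0$.

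The first step is the symmetric analog of Lemma~\ref{L:yates2}: for every $\pi/2 < \delta < \pi$ there exists $t$ with $\cos(t) > 0 > \cos(t-\delta), \cos(t+\delta)$. This is immediate from $t=0$, giving $\cos(0)=1$ and $\cos(\pm\delta)=\cos(\delta)<0$. Copying the continuity-plus-covering argument of Lemma~\ref{L:yates3} verbatim then yields an $\epsilon > 0$ and finitely many intervals $(x_i,y_i)$ such that for every real $s$ some $(x_i,y_i)$ satisfies $\cos(s+x) > \epsilon$ and $-\epsilon > \cos(s+x-\delta), \cos(s+x+\delta)$ for all $x$ in it.

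With this variant in hand the main argument transcribes that of Lemma~\ref{L:yates4}. For each interval $(x_i,y_i)$ we pick non-zero integers $k_i, l_i$ with $\gamma(k_i,l_i)\in(x_i,y_i)$ via Corollary~\ref{C:peri}, set $A = \max_i |\alpha(k_i,l_i)|$ and $B = \min_i |\beta(k_i,l_i)| > 0$, and choose $N$ so $A\eta_1^{-3N/2} < B\epsilon$. For $n \geq N+3$, applying the covering lemma with $s = -(n-2)\delta$ supplies $(k_i,l_i)$ for which the $\beta\cos$ term dominates at positions $i=1,2,3$, forcing $b_2 > 0 > b_1, b_3$; the upper bound $b_2 < C\eta_1^{n/2}$ holds because only finitely many $(k_i,l_i)$ are ever used, so $|\beta(k_i,l_i)|$ is bounded. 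For the finitely many small cases $4 \leq n \leq N+2$ we invoke the same direct construction as in Lemma~\ref{L:yates4}: writing $0 = b_n = P_n b_1 + Q_n b_2 + R_n b_3$ for the fundamental coefficients, set $b_1 = b_3 = -Q_n$ and $b_2 = P_n + R_n$ whenever $P_n, Q_n, R_n > 0$, patching any remaining sporadic cases (such as a vanishing fundamental coefficient, which can occur for a handful of small $n$ when $a = 0$) by ad-hoc choice, then enlarging $C$ to absorb the finite list. The only conceptual subtlety is the symmetric trigonometric lemma, which turns out to be strictly easier than Lemma~\ref{L:yates2} because $t=0$ works directly; everything else is a line-by-line adaptation.
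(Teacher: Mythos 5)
Your proposal follows the paper's own route: the paper likewise reduces to the shifted sign pattern by choosing $u$ with $\cos(u+\delta)>0>\cos(u),\cos(u+2\delta)$ (exactly your statement after the substitution $t=u+\delta$, where your observation that $t=0$ works by evenness of cosine is a mild simplification of the paper's explicit interval $\frac{\pi}{2}-2\delta<u<-\frac{\pi}{2}$), then repeats the covering argument of Lemma~\ref{L:yates3} and transcribes the proof of Lemma~\ref{L:yates4}. Your handling of the finitely many small values of $n$ is no less complete than the paper's own (which simply says to apply the same method as Lemma~\ref{L:yates4}), so the proposal is essentially the paper's proof.
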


\begin{proof}
Proof entirely analagous to Lemma~\ref{L:yates4}.

For Lemma~\ref{L:yates2}, we need a $u$
such that $\cos(u+\delta)>0>\cos(u),\cos(u-2\delta)$.

Pick $u$ such that $\frac{\pi}{2}-2\delta<u<\frac{-\pi}{2}$. 
There exists such a $u$ because $\delta>\frac{\pi}{2}$.

Since $\delta<\pi$, $\frac{-3\pi}{2}<u<\frac{-\pi}{2}$ and hence
$\cos(u)<0$. Similarly $\frac{\pi}{2}<u+2\delta<\frac{3\pi}{2}$
and hence $\cos(u+2\delta)<0$. 
Finally $\frac{\pi}{2}-\delta<u+\delta<\delta-\frac{\pi}{2}$,
so $-\frac{\pi}{2}<u+\delta<\frac{\pi}2$, so $\cos(u+\delta)>0$.

Then by a method equivalent to Lemma~\ref{L:yates3}
there exists an $\epsilon'>0$ and
finitely many intervals $\langle(x'_i,y'_i)\rangle_{i=1}^m$
such that for all $t$ there exists an interval $(x'_i,y'_i)$
such that for all $x\in(x'_i,y'_i)$, 
$\cos(t+x+\delta)>\epsilon'$ and 
$\epsilon'>\cos(t+x),\cos(t+x+2\delta)$.

We then apply the same method as the proof of 
Lemma~\ref{L:yates4}
\end{proof}

This allows us to prove the equivalent of 
Theorem~\ref{T:ian}.

\begin{thm}\label{T:ian2}
There exists
a real number $U$ such that for any positive integers $n, k$
with $k\ge 4$ and $n\ge U\eta_1^{3k/2}$, there is a positive sequence
$\langle x_i\rangle_{i=1}^{k}$ satisfying 
$x_{i+3}=ax_{i+2}+bx_{i+1}+cx_i$ and terminating at 
$x_k=n$.
\end{thm}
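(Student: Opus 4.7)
The plan is to mirror the proof of Theorem~\ref{T:ian} almost verbatim, replacing $\phi$ by $\eta_1$ throughout. First I would introduce the three fundamental sequences $\langle p_i\rangle$, $\langle q_i\rangle$, $\langle r_i\rangle$ satisfying the recurrence with initial triples $(1,0,0), (0,1,0), (0,0,1)$, so that every sequence terminating at $x_k = n$ corresponds to a representation $n = x_1 p_k + x_2 q_k + x_3 r_k$; the theorem then reduces to bounding the Frobenius number of $(p_k, q_k, r_k)$ by $U\eta_1^{3k/2}$. A short descent argument, using $c = 1$ to invert the recurrence as $x_i = x_{i+3} - ax_{i+2} - bx_{i+1}$, shows (exactly as in the tribonacci case) that $\gcd(p_k, q_k, r_k) = 1$, so Theorem~\ref{T:sarah} is applicable.

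To actually apply Theorem~\ref{T:sarah}, I would invoke Lemmas~\ref{L:yates4} and~\ref{L:yates5} to obtain sequences $\langle a_i\rangle_{i=1}^k$ and $\langle b_i\rangle_{i=1}^k$ terminating at $0$ with $C\eta_1^{k/2} > a_1 > 0 > a_2, a_3$ and $C\eta_1^{k/2} > b_2 > 0 > b_1, b_3$. These translate into the relations
\[
a_1 p_k = (-a_2) q_k + (-a_3) r_k, \qquad b_2 q_k = (-b_1) p_k + (-b_3) r_k,
\]
whose signs meet the hypotheses of Theorem~\ref{T:sarah}. Consequently every integer $N \ge a_1 p_k + b_2 q_k + r_k$ can be written as a positive integer combination of $p_k, q_k, r_k$, and so arises as $x_k$ of some positive sequence of length $k$.

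The last ingredient is a growth bound $\max(p_k, q_k, r_k) \le D\eta_1^k$ for some constant $D$ depending only on the recurrence. This is immediate from expanding each fundamental sequence in the basis $\eta_1^i, \eta_2^i, \eta_3^i$ and using $|\eta_2| = |\eta_3| = \eta_1^{-1/2} < 1$. Combined with the $C\eta_1^{k/2}$ bounds on $a_1$ and $b_2$, this gives $a_1 p_k + b_2 q_k + r_k \le 2 C D \eta_1^{3k/2} + D \eta_1^k$, which is at most $U \eta_1^{3k/2}$ for a suitable absolute constant $U$, since $\eta_1^k \le \eta_1^{3k/2}$. I do not expect a genuine obstacle here: all the real work is already in Lemmas~\ref{L:yates4} and~\ref{L:yates5} and in Theorem~\ref{T:sarah}. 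The only thing to watch is that the sign conventions from the two lemmas match the strict/non-strict inequalities Theorem~\ref{T:sarah} demands, but since the lemmas actually provide strict inequalities wherever the theorem allows equality, the combination goes through cleanly.
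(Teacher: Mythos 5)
Your proposal follows the same route as the paper: fundamental sequences $p_k,q_k,r_k$, a $c=1$ argument that their gcd is $1$, the two zero-terminating sequences from Lemmas~\ref{L:yates4} and~\ref{L:yates5} feeding the sign hypotheses of Theorem~\ref{T:sarah}, and a growth bound $p_k,q_k,r_k=O(\eta_1^k)$ to convert the Frobenius-type bound into $U\eta_1^{3k/2}$. This is correct and matches the paper's proof (your explicit growth estimate is in fact cleaner than the paper's, which states that bound with typographical slips).
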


\begin{proof}
Denote by $p_k, q_k$ and $r_k$ the integers such that 
$x_k=p_kx_1+q_kx_2+r_kx_3$ for all such sequences $\langle x_i\rangle_{i=1}^{k}$.

Then since there can be an integer sequence ending at $x_k=1$, there is no non-trivial
common divisor of $p_k, q_k$ and $r_k$. 

Further, by Lemma~\ref{L:yates4} and
Lemma~\ref{L:yates5} there exist integers 
$C\eta_1^{k/2}>a_1>0>a_2,a_3$ and $C\eta_1^{k/2}>b_2>0>b_1,b_3$ for which
$a_1p_k+a_2q_k+a_3r_k=b_1p_k+b_2q_k+b_3r_k=0$. Hence by Theorem~\ref{T:sarah},
for all $n\ge a_1p_k+b_2q_k+r_k$, there is such a sequence terminating at $n$.

Since $(2C+1)\zeta_1^{k/2}>a_1+b_2+1$ and 
$p_k, q_k, r_k>T\zeta_1^n$ for some fixed constant $T$,
it follows that for all $n\ge (2C+1)T\zeta_1^{3k/2}$,
there is such a sequence terminating at $n$.
\end{proof}

Finally we are able to show that all affable polynomials are
congenial.

\begin{proof}[Proof of Theorem~\ref{T:foreman}]
For our polynomial $x^3-ax^2-bx-c$ with $c=1$ and
$a+b>1$ and at most one real root, Theorem~\ref{T:ian2}
has stated the existence of a real number $U$ uch
that for any positive integers $n, k$ with $k\ge 4$
and $n\ge U\eta_1^{3k/2}$ there is a positive sequence
of length $k$ terminating at $n$.

Thus if there is no positive sequence of length $k+1$
terminating at $n$, it follows that $n<U\eta_1^{3(k+1)/2}$.

Then by Theorem~\ref{T:susan2} it follows that the number
of sequences of length $k$ terminating at $n$ is at most
$\lceil T\frac{n}{\eta_1^{3k/2}}\rceil^2<\lceil TU\eta_1^{3/2}\rceil^2.$
\end{proof}

For now we leave open the following question.

\begin{qn}
For which positive integers $a, b, c$ with $c>0$ and $a+b>0$
is the recurrence relation $x_n=ax_{n-1}+bx_{n-2}+cx_{n-3}$
congenial?
\end{qn}

\bibliography{Tribonacci}
\bibliographystyle{ieeetr}
\end{document}